\newtheorem{dummy}{anything}[section]
\newtheorem{theorem}[dummy]{Theorem}
\newtheorem{proposition}[dummy]{Proposition}
\theoremstyle{definition}
\newtheorem{remark}[dummy]{Remark}
\DeclareMathOperator{\id}{id} 
\newcommand{\f}[1]{\mathbb{#1}}
\newcommand{\QED}{\vspace{-.32in}\begin{flushright}\qed\end{flushright}}
 \newcommand{\sgn}{\operatorname{sgn}}
\begin{document}
\title[]
{\ \ \ \  Planar open books with four binding components}
\author[Lekili]{Yank\i\ Lekili}
\address{Max-Planck Institut f\"ur Mathematik, Bonn, Germany}
\email{ylekili@mpim-bonn.mpg.de} 

\begin{abstract} 

We study an explicit construction of planar open books with four binding components on
any three-manifold which is given by integral surgery on three component pure braid
closures. This construction is general, indeed any planar open book with four binding
components is given this way. Using this construction and results on exceptional
surgeries on hyperbolic links, we show that any contact structure of $S^3$ supports a
planar open book with four binding components, determining the minimal number
of binding components needed for planar open books supporting these contact
structures. In addition, we study a class of monodromies of a planar open book with
four binding components in detail. We characterize all the symplectically fillable
contact structures in this class and we determine when the Ozsv\'ath-Szab\'o contact
invariant vanishes.  As an application, we give an example of a right-veering
diffeomorphism on the four-holed sphere which is not destabilizable and yet supports
an overtwisted contact structure. This provides a counterexample to a conjecture of
Honda, Kazez, Mati\'c from \cite{HKM3}.  \end{abstract}

\maketitle

\let\thefootnote\relax\footnotetext{I would like to thank Andy Wand for helpful
conversations, Tolga Etg\"u and John Etnyre for useful comments on a previous draft. I
also acknowledge Max-Planck Institut f\"ur Mathematik for the support. }

\section{Introduction}

Let $Y$ be a closed oriented 3--manifold and $\xi$ be a contact structure on $Y$.
Recall that an open book is a fibration $\pi: Y - B \to S^1$ where $B$ is an oriented
link in $Y$ such that the fibres of $\pi$ are Seifert surfaces for $B$. The contact
structure $\xi$ is said to be supported by an open book $\pi$ if $\xi$ is the kernel
of a one-form $\alpha$ such that $\alpha$ evaluates positively on the positively
oriented tangent vectors of $B$ and $d\alpha$ restricts to a positive area form on
each fibre of $\pi$. The fibres of $\pi$ are called \emph{pages} of the open book. We
will consider abstract open books $(S,\phi)$ where $S$ is a page of the open book, and
$\phi \in Diff(S,\partial S)$. It is easy to construct an open book as above, starting
from the data $(S,\phi)$ (see \cite{etnyrelectures}).  

It is well known that every contact structure $\xi$ is supported by an open book on
$Y$ and all open book decompositions of $Y$ supporting $\xi$ are equivalent up to
positive stabilizations and destabilizations \cite{G}. In light of this theorem, to
study contact structures, we will study abstract open books $(S,\phi)$ supporting
them. We should note that in our case the right notion of equivalence provided by the
Giroux's theorem is contact isomorphism (not contact isotopy, see \cite{etnyrelectures}).

In \cite{etnyre}, Etnyre proved that every overtwisted contact structure is supported
by a planar open book. On the other hand, there are known obstructions for a tight
contact structure to admit a supporting planar open book, \cite{etnyre}, \cite{OSS},
\cite{wand}. 

For a contact structure $(Y,\xi)$, in $\cite{EtOz}$, Etnyre and Ozbagci defined
invariants of $\xi$ by a measure of topological complexity of its supporting open
books. We recall these here: \[ \text{sn}(\xi) = \text{min}\{ -\chi(\pi^{-1}(\theta))|
\pi : Y-B \to S^1 \text{\ supports\ } \xi \} \] \[ \text{sg}(\xi) = \text{min}\{
g(\pi^{-1}(\theta))| \pi : Y-B \to S^1 \text{\ supports\ } \xi \} \] \[ \text{bn}(\xi)
= \text{min}\{ |B| | \pi : Y-B \to S^1 \text{\ supports\ } \xi \text{\ and \ }
g(\pi^{-1}(\theta)) = sg(\xi) \} \ , \] where $\theta$ is any point in $S^1$ , $g(.)$
is the genus, and $|.|$ is the number of components.

These are called \emph{support norm, support genus} and \emph{binding number} in the
order given above. In general, it is hard to compute these invariants for a given
$\xi$. From the above definition, it is easy to see that $\text{sn}(\xi) \leq
2 \text{sg}(\xi) + \text{bn}(\xi)-2 $, however it is known that in general these
invariants are independent of each other (\cite{EtL}, \cite{balEt}).

In this article, we will determine all of these invariants for all the contact
structures on $S^3$. Previously for any contact structure $\xi$ on $S^3$, Etnyre and
Ozbagci showed that $sg(\xi)=0$, $bn(\xi) \leq 6 $ and $sn(\xi) \leq 4$. Recall that,
there exists a unique tight contact structure on $S^3$ having $d_3 = -\frac{1}{2}$. It
is easy to show that this is supported by the open book $(D^2, \id)$, hence $sg=0$,
$bn=1$ and $sn=-1$ for the tight contact structure on $S^3$. The overtwisted contact
structures on $S^3$ are classified by their $d_3$ invariants which takes values in
$\f{Z} + \frac{1}{2}$. We will write $\xi_n$ for the overtwisted contact structure on
$S^3$ with $d_3 =n$. Our first result determines the invariant of these:

\begin{theorem} 
\label{theorem1}
Let $\xi_n$ be the overtwisted contact structure on $S^3$ with $d_3(\xi_n)=n$, then $sg(\xi_n)=0$ for
all $n$,\[ bn(\xi_\frac{1}{2}) = 2 \] \[ bn(\xi_{-\frac{1}{2}})= bn(\xi_{\frac{3}{2}})= 3\] 
\[bn(\xi_n)=4 \text{\ for all \ } n \neq - \frac{1}{2}, \frac{1}{2}, \frac{3}{2} \]
\[sn(\xi_\frac{1}{2}) = 0 \]
\[sn(\xi_{-\frac{1}{2}}) = sn(\xi_{\frac{3}{2}}) = 1  \]
\[sn(\xi_n) = 2 \text{\ for all \ } n \neq - \frac{1}{2}, \frac{1}{2}, \frac{3}{2} \]
 \end{theorem}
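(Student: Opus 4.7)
The proof splits into three tasks: proving $sg(\xi_n)=0$, pinning down the remaining values of $bn$, and pinning down the remaining values of $sn$. The first is immediate from Etnyre's theorem that every overtwisted contact structure is supported by a planar open book, so $sg(\xi_n)=0$. Since a planar page with $k$ boundary components has $-\chi=k-2$ and one has the general inequality $sn\le 2sg+bn-2$, the stated values of $sn$ will follow once $bn$ is determined, provided we also rule out higher-genus open books realizing smaller $-\chi$. Altogether the proof reduces to: (a) classify all open books on $S^3$ with $-\chi\le 1$ together with the contact structures they support, and (b) construct, for every $n\notin\{-\tfrac12,\tfrac12,\tfrac32\}$, a planar open book with four binding components supporting $\xi_n$.

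For task (a), the constraint $-\chi\le 1$ restricts the page to a disk, annulus, pair of pants, or once-punctured torus. The disk case has trivial monodromy and yields only the tight contact structure. The annulus monodromy is $\tau^k$; the underlying manifold is a lens space, $S^3$ occurs exactly for $k=\pm1$, and a direct $d_3$ computation identifies these as the tight structure and $\xi_{1/2}$ respectively. For the pair of pants the mapping class group is generated by the three boundary-parallel twists, so every monodromy has the form $\tau_1^{a_1}\tau_2^{a_2}\tau_3^{a_3}$ and the underlying manifold is Seifert fibered; I would enumerate the triples $(a_1,a_2,a_3)$ producing $S^3$ via their Seifert invariants and compute $d_3$ for each, expecting only $\xi_{-1/2}$, $\xi_{3/2}$, and tight structures to appear. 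For the once-punctured torus the monodromy lies in $SL(2,\mathbb{Z})$, and $\Sigma_{1,1}$ open books with total space $S^3$ are the classical torus-knot fibrations; the same $d_3$ calculation produces no additional overtwisted $\xi_n$.

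For task (b), the main construction of the paper --- integer surgery on the closure of a three-component pure braid --- manufactures planar open books with four binding components on a large class of contact manifolds, and in particular on $(S^3,\xi)$ for many $\xi$. By choosing the braid and surgery coefficients appropriately, one tunes $d_3$ to realize every residue class in $\mathbb{Z}+\tfrac12$. Combined with task (a) this gives both the upper bounds $bn(\xi_n)\le 4$, $sn(\xi_n)\le 2$, and the matching lower bounds for $n\notin\{-\tfrac12,\tfrac12,\tfrac32\}$.

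The main obstacle is the uniform computation of $d_3$ across all of these monodromies. The standard tool is to positively stabilize until the monodromy is a product of positive Dehn twists, obtaining an allowable Lefschetz fibration on a cobordism $W$, and then apply
\[
d_3 \;=\; \tfrac14\bigl(c_1(\mathfrak{s})^2-3\sigma(W)-2\chi(W)\bigr)+q,
\]
where $q$ is the standard index correction. The bookkeeping is longest for the pair-of-pants and once-punctured torus cases, where some exponents are negative and one must carefully track the linking matrix of the vanishing cycles and the resulting signature; this is where the technical work of the proof is concentrated.
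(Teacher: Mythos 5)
Your outline follows the same overall architecture as the paper: dispose of $sg$ via Etnyre's planarity theorem, classify all open books with page of Euler characteristic $\geq -1$ (disk, annulus, pair of pants, once-punctured torus) to obtain the small cases and all the lower bounds, then realize every remaining $\xi_n$ by a planar open book with four binding components built from integer surgery on a pure three-braid closure, computing $d_3$ via the associated $4$-manifold. Two points in your plan are off in a way that would trip up an execution and should be fixed. First, you propose to ``positively stabilize until the monodromy is a product of positive Dehn twists'' before applying the $d_3$ formula; but by Loi--Piergallini/Giroux a contact structure admits a supporting open book with positive monodromy if and only if it is Stein fillable, and the $\xi_n$ here are overtwisted, so this step is impossible. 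Moreover it contradicts the formula you then write, since the $q$-term is precisely the correction for the number of \emph{negative} Dehn twists in the factorization and would vanish after such a stabilization. The paper applies the formula directly to monodromies with negative twists, attaching $+1$-framed $2$-handles for the negative twists and tracking $c^2$, $\chi$, $\sigma$, $q$; this is what you should do, so just drop the stabilization step. Second, the once-punctured torus case must enumerate all genus-one fibered knots in $S^3$, which are the two trefoils \emph{and} the figure-eight knot (a hyperbolic knot, not a torus knot), giving monodromies $\tau_a^{\pm1}\tau_b^{\pm1}$; your restriction to torus-knot fibrations omits the figure-eight. The omission happens not to change the conclusion, since the figure-eight fibration supports $\xi_{1/2}$ and the left trefoil supports $\xi_{3/2}$, but a rigorous argument needs the full list.

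On task (b) your plan is stated at a level of generality (``tune the surgery coefficients'') that papers over where the real work lies: one must actually exhibit a family hitting every value of $\mathbb{Z}+\tfrac12$. The paper does this with two explicit one-parameter families, $\phi_p=\tau_b\tau_c^{-3}\tau_d\tau_e^{-p}\tau_f^{-2}$ with $d_3=2p-\tfrac12$ and its mirror $\bar\phi_p$ with $d_3=-2p+\tfrac52$; the mirroring trick (reverse the braid orientation, which reverses the orientation of $S^3$ and hence flips $d_3$ along with all the Kirby-calculus data) is the key device that halves the computation and lets the two arithmetic progressions together exhaust $\mathbb{Z}+\tfrac12$. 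You should build this into your plan explicitly, together with a verification --- the paper does it by Kirby calculus --- that the chosen surgery on the chosen pure-braid closure really does give $S^3$.
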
 

Note that the results for $n = - \frac{1}{2}, \frac{1}{2}, \frac{3}{2}$ were
calculated by $\cite{EtOz}$ via an easy classification of planar open books with three
or less boundary components, which we review here. Let $(Y,\xi)$ be the contact
three-manifold supported by $(S,\phi)$. Below, we write $(Y,\xi_{st})$ to denote the
unique tight contact structure on $Y$ whenever $Y$ has a unique tight contact
structure. These descriptions and more can be found in \cite{EtOz}.

\begin{itemize} 

\item If $S = D^2$ , then $\phi = \id$ and $(Y,\xi) = (S^3, \xi_{st})$.  \item For $S
= S^1 \times [0,1]$, let $a$ denote the simple closed curve generating $H_1(S)$. If
$\phi = \tau_a^p$ , then $(Y,\xi) = (L(p,p-1),\xi_{st}) $ for $p>0$, $(Y,\xi)=(S^1
\times S^2 , \xi_{st})$ for $p=0$, and $(Y,\xi)= (L(p,1), \xi)$ for $p<0$, where $\xi$
is overtwisted with $e(\xi)=0$ and $d_3(\xi) = \frac{3-p}{4} $.  Note that $S^3$
appears exactly for $p=\pm 1$. For $p=1$, this is a stabilization of the standard open
book of tight contact structure in $S^3$, and for $p=-1$, we get the overtwisted
contact structure $\xi_{\frac{1}{2}}$.

\item When $S$ has three boundary components, let $a$, $b$, $c$ denote boundary
parallel simple closed curves.  If $\phi = \tau_a^p \tau_b^q \tau_c^r$, then $Y$ is
the Seifert fibered space with $e_0 = \lfloor -\frac{1}{p} \rfloor + \lfloor
-\frac{1}{q} \rfloor + \lfloor -\frac{1}{r} \rfloor$ as shown in Figure \ref{Figure1}.
We only note that it is easy to draw a contact surgery diagram of these contact
structures \cite{EtOz}. The authors calculate exactly when $S^3$ has such an open
book, it turns out all of these open books support either $\xi_{-\frac{1}{2}}$ or
$\xi_{\frac{3}{2}}$. 

\end{itemize}

To determine $bn(\xi_n)$ for the remaining cases, we simply construct planar open
books with four binding components supporting $\xi_n$ for the remaining cases. This
determines $bn(\xi_n)$. To calculate $sn(\xi_n)$, we show that none of these contact
structures can be supported by an open book with page a torus with one boundary
component. 

\begin{figure}[!h]
\centering
\includegraphics[scale=0.5]{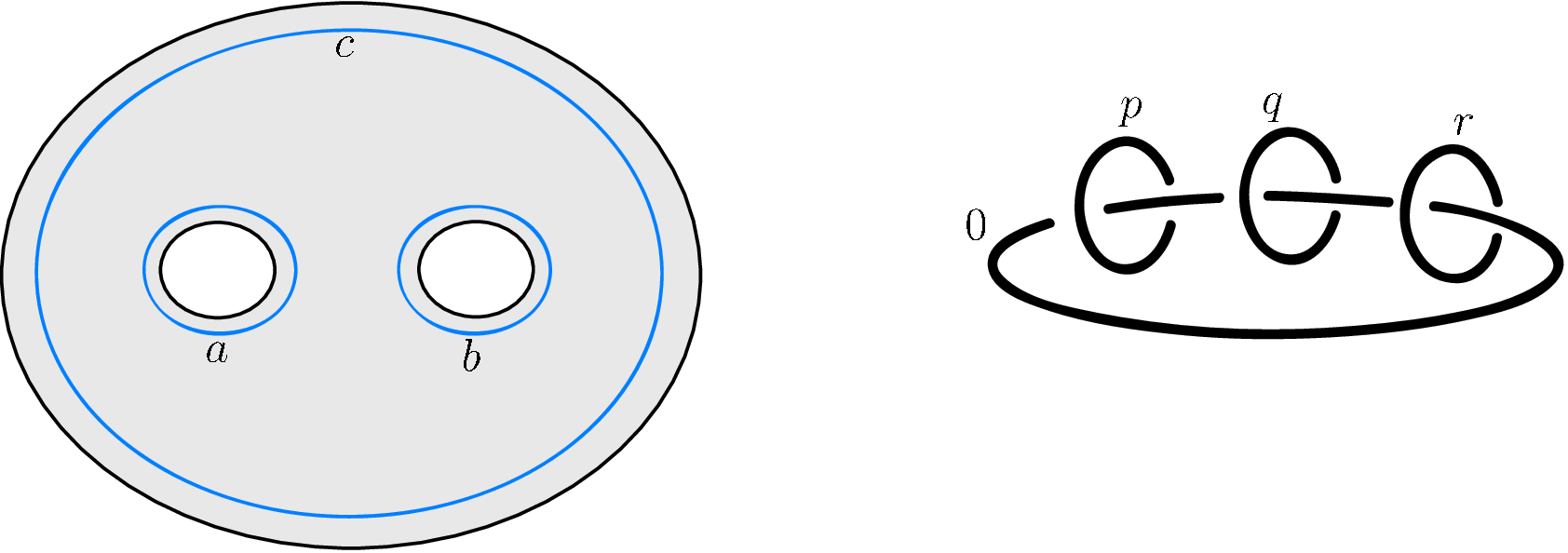} 
\caption{Open books with page a three-holed sphere}
\label{Figure1}
\end{figure}

In \cite{HKM1}, Honda, Kazez and Mati\'c proves that a contact structure $\xi$ is
tight if and only if all of the open book decompositions $(S,\phi)$ supporting $\xi$
have right-veering monodromy $\phi \in Aut(S,\partial S)$. This result is useful in
proving that $\xi$ is overtwisted by exhibiting a supporting open book with a
monodromy which is not a right-veering diffeomorphism. On the other hand, when $S$ is a punctured torus, the
same authors in \cite{HKM3} also prove that the supported contact structure is tight
if and only if the \emph{given} monodromy is right-veering. In general, however a
right-veering diffeomorphism does not always correspond to a tight contact structure.
In fact, any open book can be stabilized to a right-veering one. However, Honda, Kazez
and Mati\'c optimistically conjecture that if the monodromy is given by a right
veering diffeomorphism that does not admit a destabilization (in the sense of Giroux
stabilization) then the supported contact structure is tight. Our next result gives a
counterexample to this conjecture:
\begin{theorem}
\label{theorem2}
There exists an open book $(S,\phi)$ on the Poincar\'e homology sphere
$\Sigma(2,3,5)$ where $S$ is a four-holed sphere and $\phi = \tau_a^5 \tau_b^2 \tau_c
\tau_d \tau_e^{-2}$ which
is right-veering and not destabilizable such that the supported contact structure is
an overtwisted contact structure. 
\end{theorem}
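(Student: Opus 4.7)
The plan is to verify four separate properties: (a) the underlying three-manifold is $\Sigma(2,3,5)$; (b) the supported contact structure $\xi$ is overtwisted; (c) $\phi$ is right-veering; and (d) $\phi$ is not destabilizable. I would tackle them in this order.

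For (a) and (b), I would start from the correspondence developed earlier in the paper between planar open books with four binding components and integral surgeries on three-component pure braid closures. The exponents $(5,2,1,1,-2)$ yield an explicit framed link in $S^3$; a sequence of Kirby moves (blow-downs and handle slides) reduces this link to a standard surgery presentation of $\Sigma(2,3,5)$, for instance as the boundary of the $E_8$ plumbing. For overtwistedness, I would then apply the vanishing criterion for the Ozsv\'ath-Szab\'o contact invariant established in the paper for this monodromy class: the negative exponent $-2$ combined with the positive combination $(5,2,1,1)$ should land $\phi$ in the vanishing region, giving $c(\xi)=0$. Since $\Sigma(2,3,5)$ carries a unique tight contact structure, which is Stein fillable and hence has nonvanishing contact invariant, this forces $\xi$ to be overtwisted. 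A direct $d_3$ computation from the surgery diagram offers an independent check.

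For (c), I would verify the arc criterion: for every essential properly embedded arc $\gamma \subset S$, the image $\phi(\gamma)$ must lie weakly to the right of $\gamma$ at each shared boundary endpoint. The boundary-parallel positive twists $\tau_a^5, \tau_b^2, \tau_c, \tau_d$ contribute right-veering behavior at their respective boundaries, while $\tau_e^{-2}$ contributes left-veering behavior only along arcs crossing $e$. The multiplicities $5$ and $2$ on $\tau_a$ and $\tau_b$ are chosen precisely so that the positive twisting at those boundaries dominates the two units of negative twisting from $\tau_e^{-2}$. Exploiting the symmetry between the boundaries $c$ and $d$, the verification reduces to finitely many arc types.

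The hardest step and the main obstacle is (d). Using Goodman's characterization, $\phi$ admits a destabilization if and only if there is an essential properly embedded arc $\gamma$ such that $\gamma$ and $\phi(\gamma)$ co-bound an embedded disk in $S$ (equivalently, intersect only at a single endpoint in a controlled way). Since the arc complex of the four-holed sphere is combinatorially manageable, I would enumerate isotopy classes of arcs up to the natural symmetries of $\phi$ and, for each, estimate the geometric intersection $\iota(\gamma, \phi(\gamma))$. The combination of $\tau_e^{-2}$ with the several positive twists should force $\iota(\gamma, \phi(\gamma)) \geq 1$ in the interior for every essential arc, ruling out destabilization. Together, (a)--(d) yield the theorem and the desired counterexample to the conjecture of Honda, Kazez, and Mati\'c.
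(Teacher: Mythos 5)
Your plan correctly identifies the four things to be checked, and your treatments of (a) and (b) are essentially sound: (a) is the same Kirby-calculus verification the paper carries out, and for (b) you apply the paper's vanishing criterion for $c^+$ (here $\min\{\alpha,\beta,\gamma,\delta\}=1 < 2 = \max\{-\epsilon,-\eta,0\}$, so $c^+=0$) together with uniqueness of the tight structure; the paper's primary argument for (b) is slightly different (Etnyre's obstruction shows the tight structure on $\Sigma(2,3,5)$ supports \emph{no} planar open book at all), but the paper explicitly offers your argument as an alternative, so there is no issue.

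The real problem is (d). Your strategy is to invoke a Goodman-type criterion and then ``enumerate isotopy classes of arcs up to the natural symmetries of $\phi$'' and show that $\iota(\gamma,\phi(\gamma))\geq 1$ in every case. But the set of isotopy classes of essential properly embedded arcs in a four-holed sphere is \emph{infinite}, and the only symmetry available to you is the $c\leftrightarrow d$ swap, which does not reduce the problem to a finite list. You give no mechanism for bounding the arcs that need to be checked, so as written the argument does not terminate. The paper sidesteps this entirely with a structural argument: a destabilization of a planar page stays planar, so $(S,\phi)$ would have to be a positive stabilization of $(P,\phi')$ with $P$ a three-holed sphere and $\phi'=\tau_a^p\tau_b^q\tau_c^r$. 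Such open books live on Seifert fibered spaces with $e_0=\lfloor -1/p\rfloor+\lfloor -1/q\rfloor+\lfloor -1/r\rfloor$, and $e_0(\Sigma(2,3,5))=-2$ forces at least one exponent to be negative. The paper then shows, by putting the Hopf-band stabilization curve $s$ into a normal form (its Figure~10), that \emph{every} stabilization $\tau_s\phi'$ of such a $\phi'$ fails to be right-veering, contradicting (c). This reduction from ``not destabilizable'' to ``right-veering'' is the key idea missing from your proposal, and it is what makes the argument finite and checkable.

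A secondary, smaller concern: your verification of (c) also proposes checking the arc criterion ``on finitely many arc types,'' which again faces the problem of infinitely many arcs. The paper instead decomposes $\phi=\tau_a^4\cdot(\tau_a\tau_b\tau_c\tau_d\tau_e^{-2})\cdot\tau_b$, applies Corollary~3.4 of Honda--Kazez--Mati\'c to the middle factor at each boundary component, and uses the fact that right-veering monodromies form a submonoid. You should either invoke such a structural criterion or explain why your arc check reduces to finitely many cases; as stated it does not.
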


\begin{figure}[!h]
\centering
\includegraphics[scale=0.7]{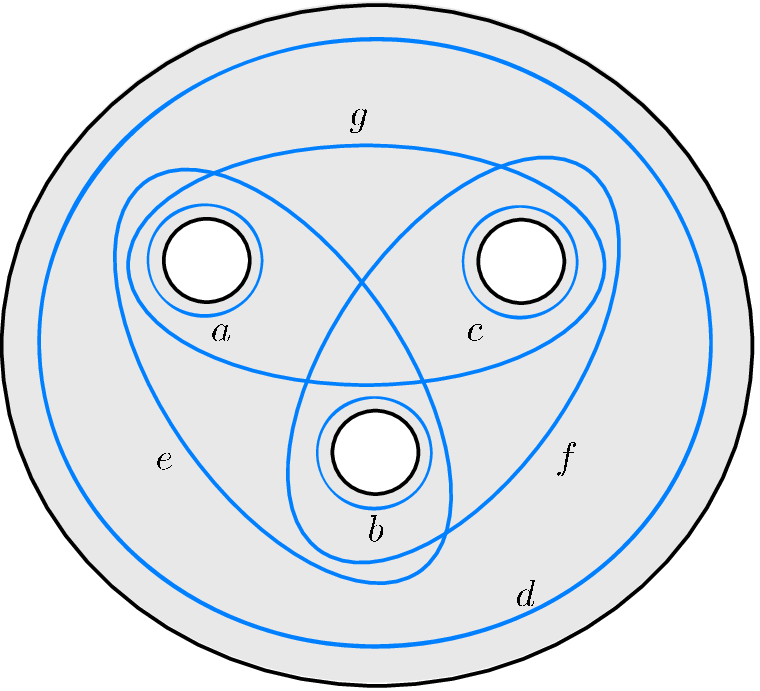} 
\caption{Generators of the mapping class group of four-holed sphere}
\label{Figure2}
\end{figure}

Of independent interest, we also prove the following characterization concerning positive
factorizations of a family of elements in the mapping class group of four-holed
sphere. We denote by $c^+(\phi)$ the Ozsv\'ath-Szab\'o contact invariant of the
contact structure supported by $(S,\phi)$, where $S$ is the four-holed sphere.

\begin{theorem}
\label{theorem3}
Let $\phi = \tau_a^\alpha \tau_b^\beta \tau_c^\gamma \tau_d^\delta \tau_e^\epsilon
\tau_f^\eta$, then $\phi$ admits a positive factorization if and only if
$\min\{\alpha,\beta,\gamma,\delta\} \geq \max \{-\epsilon, -\eta, 0\}$. 
Furthermore, this latter condition is satisfied if and only if $c^+(\phi) \neq 0$.
\end{theorem}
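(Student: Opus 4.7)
Let me label the three equivalent assertions: (P) $\phi$ admits a positive factorization; (I) $\min\{\alpha,\beta,\gamma,\delta\} \geq \max\{-\epsilon,-\eta,0\}$; (C) $c^+(\phi) \neq 0$. The plan is to close the loop (I)$\Rightarrow$(P)$\Rightarrow$(C)$\Rightarrow$(I).

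For (I)$\Rightarrow$(P), the key tool is the lantern relation in $\operatorname{MCG}(S,\partial S)$: if $g$ denotes the third non-boundary-parallel essential simple closed curve (not shown in Figure~\ref{Figure2}), then
\[
\tau_e\tau_f\tau_g = \tau_a\tau_b\tau_c\tau_d,
\]
and in particular this product is central. From centrality one extracts two useful facts: $\tau_e$ commutes with $\tau_f\tau_g$ (and cyclically), yielding $(\tau_e\tau_f\tau_g)^m = \tau_e^m(\tau_f\tau_g)^m$ for all $m$; and $\tau_f\tau_g\tau_f^{-1}=\tau_{\tau_f(g)}$ is a positive twist along a legitimate simple closed curve. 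Without loss of generality assume $-\epsilon \geq -\eta$ by the $e\leftrightarrow f$ symmetry, and set $k = \max\{-\epsilon,0\}$. Condition (I) gives $\alpha,\beta,\gamma,\delta \geq k$, so iterating the substitution $\tau_a\tau_b\tau_c\tau_d\tau_e^{-1} = \tau_f\tau_g$ together with centrality of boundary twists yields
\[
\phi = \tau_a^{\alpha-k}\tau_b^{\beta-k}\tau_c^{\gamma-k}\tau_d^{\delta-k}\,(\tau_f\tau_g)^k\,\tau_f^\eta.
\]
If $\eta \geq 0$ this is already a positive factorization; if $\eta < 0$ with $l = -\eta \leq k$, the conjugation identity $\tau_f\tau_g\tau_f^{-1} = \tau_{\tau_f(g)}$ converts the tail $(\tau_f\tau_g)^k\tau_f^{-l}$ into $(\tau_f\tau_g)^{k-l}\tau_{\tau_f(g)}\tau_{\tau_f^2(g)}\cdots\tau_{\tau_f^l(g)}$, a positive word.

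For (P)$\Rightarrow$(C), a positive factorization realises $(S,\phi)$ as the binding of an allowable Lefschetz fibration over $D^2$, yielding a Stein filling of $(Y,\xi_\phi)$ via Eliashberg's theorem and the Giroux correspondence. Plamenevskaya's theorem then gives $c^+(\phi)\neq 0$ for any Stein fillable contact structure.

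The heart of the proof is (C)$\Rightarrow$(I), which I argue by contrapositive. Suppose (I) fails and, by symmetry, $\alpha = \min\{\alpha,\beta,\gamma,\delta\} < \max\{-\epsilon,-\eta,0\}$. If $\alpha<0$, then the fractional Dehn twist coefficient of $\phi$ about the component parallel to $a$ equals $\alpha$ — the twists $\tau_b,\tau_c,\tau_d$ are supported at other boundary components and $\tau_e,\tau_f$ are non-boundary-parallel, each contributing zero FDTC at $a$ — so $\phi$ is not right-veering and the supported contact structure is overtwisted by Honda-Kazez-Mati\'c \cite{HKM1}, giving $c^+(\phi)=0$. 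In the remaining range $0 \leq \alpha < \max\{-\epsilon,-\eta\}$ the monodromy may still be right-veering, so the vanishing of $c^+$ must be certified more delicately: I would realise $(Y,\xi_\phi)$ via the surgery description from Theorem~\ref{theorem1} as contact surgery on an explicit Legendrian link in $(S^3,\xi_{\rm std})$ whose $(+1)$-surgery factors correspond to $\tau_e^\epsilon$ and $\tau_f^\eta$, and apply the Lisca-Ozsv\'ath-Stipsicz-Szab\'o formula for the contact invariant. The invariant will be shown to lie in the kernel of the surgery map precisely when the available positive boundary twists are insufficient to cancel the negative ones. I expect this subtler case of the final implication to be the main obstacle: with the FDTC non-negative at every boundary the right-veering criterion is blind, and the only route appears to be a direct Heegaard Floer surgery computation, either via the Lisca-Ozsv\'ath-Stipsicz-Szab\'o formula with carefully chosen Legendrian representatives or by producing an overtwisted disk explicitly in this narrow borderline regime.
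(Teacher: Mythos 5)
Your proof of the two ``easy'' implications matches the paper's. For (I)$\Rightarrow$(P), the paper uses the lantern relation in essentially the way you do, absorbing negative powers of $\tau_e$ and $\tau_f$ into conjugates $\tau_{\tau_e^j(g)}$ of the lantern curve; your explicit use of centrality of $\tau_a\tau_b\tau_c\tau_d = \tau_e\tau_f\tau_g$ and the resulting commutativity of $\tau_e$ with $\tau_f\tau_g$ is sound and produces the same positive word. For (P)$\Rightarrow$(C), citing Stein fillability and Ozsv\'ath-Szab\'o is exactly what the paper does.

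The gap is the one you flag yourself: (C)$\Rightarrow$(I) in the regime $0\le\min\{\alpha,\beta,\gamma,\delta\}<\max\{-\epsilon,-\eta\}$, which your cyclic strategy leans on entirely (you have no independent proof of (P)$\Rightarrow$(I)). The Legendrian-surgery/LOSS computation you sketch is not carried out, and it is not clear it would terminate cleanly. The paper avoids all of this with Baldwin's capping-off theorem (\cite{baldwinCap}, Corollary 1.3): cap off the boundary component parallel to, say, $b$; on the resulting three-holed sphere the curve $e$ becomes boundary-parallel (isotopic to $a$), so the induced monodromy $\phi_b$ has a factor $\tau_a^{\alpha+\epsilon}$ with $\alpha+\epsilon<0$, a genuinely negative boundary-parallel twist on a pair of pants. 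On a three-holed sphere the monodromy is abelian, so this is decisively not right-veering, hence $c^+(\xi_b)=0$; Baldwin's theorem then forces $c^+(\phi)=0$. This one step replaces both of your cases and bypasses the FDTC issue.

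Two further points. First, your Case~1 claim that ``the FDTC about the component parallel to $a$ equals $\alpha$'' is not justified: $\tau_e$ and $\tau_f$ are interior curves in the four-holed sphere and contribute to the behaviour near $\partial_a$, so the FDTC at $a$ is not simply the exponent of $\tau_a$; you would need an argument (the capping-off trick supplies one). Second, the paper also gives a short, purely algebraic proof of (P)$\Rightarrow$(I) by passing to $H_1(\text{Map}(S,\partial S))\cong\f{Z}^6$ and observing that every positive Dehn twist on $S$ abelianizes to one of $e_1,\dots,e_6$ or $n=(1,1,1,1,-1,-1)$; a nonnegative combination of these vectors is forced to satisfy (I). This argument is independent of any Floer-theoretic input and is worth having even if one also closes the loop through $c^+$.
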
 

Note that the results of \cite{wendl} and \cite{nieder} together with the above
proposition imply that the contact structure supported by $(S,\tau_a^\alpha
\tau_b^\beta \tau_c^\gamma \tau_d^\delta \tau_e^\epsilon \tau_f^\eta) $ admits a Stein
filling (or equivalently a weak-symplectic filling) if and only if
$\min\{\alpha,\beta,\gamma,\delta\} \geq \max \{-\epsilon, -\eta, 0\}$. An interesting
question left open is whether all non-fillable contact structures in the class of
monodromies considered above are overtwisted.  Note that one can easily show that some
monodromies give overtwisted contact structures by showing that they are not
right-veering, however Theorem \ref{theorem2} shows that right-veering restriction by
itself is not enough to answer this question.

We pause here to declare our conventions for the rest of the paper. We denote by
$\tau_a$ a right handed Dehn twist around the curve $a$. We will adhere to braid
notation for compositions: $\tau_a \tau_b$ means applying a right handed Dehn twist
around $a$ first and then a right handed Dehn twist around $b$. We will also use the
following conventions for braid groups: Our braids will be drawn from top to bottom
with the strands numbered $1,2,\ldots, n$ from left to right. The convention for
positive and negative half twist is as shown below.

\begin{figure}[!h]
\centering
\includegraphics[scale=0.6]{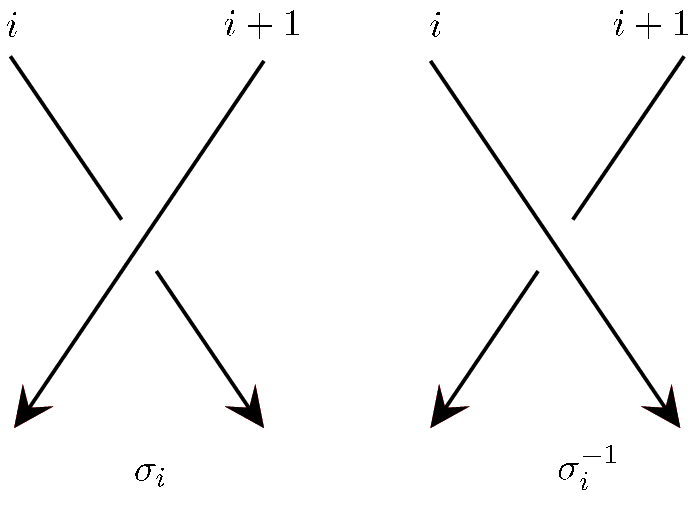} 
\caption{Braid group generators}
\label{Figure3}
\end{figure}

\section{A topological study of planar open books}
\subsection{Planar open books and Dehn surgery}

We first recall a classical proposition relating the mapping class group of a
$n$-holed disk with Dehn surgery on pure braids (see for example \cite{prasolov} for more
than presented here). Let $\phi$ be a diffeomorphism of the
$n$-holed disk, identity on the boundary. This diffeomorphism can be extended to a
diffeomorphism $\tilde{\phi}$ of the disk simply by extending $\phi$ by identity.
Since any diffeomorphism of the disk identical on the boundary is isotopic to
identity, there exists an isotopy $\tilde{\phi}_t$ such that $\tilde{\phi}_0= \id$ and
$\tilde{\phi}_1=\tilde{\phi}$ . Let $x_1,\ldots,x_n$ be points in the disks that fills
the holes, then we obtain a pure braid $\beta(\phi)$ by considering the union of arcs $(t,
\tilde{\phi}_t(x_i))$ in $D^2 \times [0,1]$, $t \in [0,1]$ (see Figure \ref{Figure4}
for an example). This pure braid almost
captures the whole $\phi$, except $\phi$ can have extra boundary twists around the
holes. We summarize this in the proposition below. Let $D_n$ denote the $n$ holed
disk, and $\text{Map}(D_n, \partial D_n)$ be the mapping class group of
diffeomorphisms which are identical on the boundary. Let $P_n$ be the pure braid group
on $n$ strands. 

\begin{proposition} \label{braid}
$ Map (D_n , \partial D_n) = P_n \times \f{Z}^n $ \QED
\end{proposition}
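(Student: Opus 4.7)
The plan is to show that the map $\beta\colon \text{Map}(D_n,\partial D_n) \to P_n$ described in the paragraph preceding the proposition fits into a split short exact sequence
\begin{equation*}
1 \to \mathbb{Z}^n \to \text{Map}(D_n,\partial D_n) \xrightarrow{\beta} P_n \to 1,
\end{equation*}
and that the $\mathbb{Z}^n$ factor (generated by Dehn twists parallel to each of the $n$ inner boundary components) is central, so the extension is actually a direct product.

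First I would verify that $\beta$ is a well-defined group homomorphism. The key input is Smale's theorem that $\text{Diff}(D^2,\partial D^2)$ is contractible; this makes the isotopy $\widetilde{\phi}_t$ unique up to homotopy rel endpoints, so the pure braid traced out by $(t,\widetilde{\phi}_t(x_i))$ is well-defined in $\pi_1$ of the ordered configuration space $\text{Conf}^{\text{pure}}_n(\operatorname{int} D^2) = P_n$. Composability with respect to composition of diffeomorphisms is routine once one concatenates isotopies.

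Next I would check surjectivity: any pure braid is by definition the trace of a loop of ordered configurations based at $(x_1,\ldots,x_n)$; extending this loop to an ambient isotopy of $D^2$ fixing $\partial D^2$ (possible because $\text{Diff}(D^2,\partial D^2)$ acts transitively with connected stabilizer on ordered configurations, by the parametrized isotopy extension theorem) produces a time-$1$ map $\widetilde{\phi}$ fixing each $x_i$. After cutting out small disk neighborhoods of the $x_i$ and isotoping to fix these collars trivially, we obtain a preimage in $\text{Map}(D_n,\partial D_n)$.

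The heart of the proof, and what I expect to be the main obstacle, is computing the kernel. Suppose $\beta(\phi)$ is trivial. Then the extension $\widetilde{\phi}$ of $\phi$ over the filled-in disk is isotopic (rel $\partial D^2$ and rel $\{x_1,\ldots,x_n\}$) to the identity; this uses the Birman-type fibration
\begin{equation*}
\text{Diff}(D^2, \partial D^2, \{x_i\}) \hookrightarrow \text{Diff}(D^2,\partial D^2) \to \text{Conf}^{\text{pure}}_n(\operatorname{int} D^2)
\end{equation*}
and its long exact sequence of homotopy groups, together with Smale's theorem to kill the middle term. Restricting the rel-$\{x_i\}$ isotopy to $D_n$ shows $\phi$ differs from the identity only inside small annular neighborhoods of the inner boundary components; the mapping class group of an annulus rel boundary is $\mathbb{Z}$, generated by a boundary-parallel Dehn twist, so $\phi$ is a product of such twists, giving kernel $\mathbb{Z}^n$.

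Finally I would produce the splitting and upgrade semidirect to direct product. A splitting $s\colon P_n \to \text{Map}(D_n,\partial D_n)$ is obtained by realizing a pure braid through an ambient isotopy whose derivative near each $x_i$ is the identity, which removes the ambiguity of boundary twists. Centrality of the $\mathbb{Z}^n$ factor follows because any element of $\text{Map}(D_n,\partial D_n)$ can be isotoped rel $\partial D_n$ to be the identity on a collar of each inner boundary circle, hence commutes with Dehn twists supported in those collars. Together these give the direct-product decomposition $\text{Map}(D_n,\partial D_n)\cong P_n\times \mathbb{Z}^n$.
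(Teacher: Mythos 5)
The paper does not give a proof of this proposition: it is stated with an immediate \textsl{q.e.d.}\ and treated as a classical fact, with \cite{prasolov} cited in the preceding paragraph. So there is no paper proof to compare against; I can only evaluate your argument on its own terms.

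Your outline is the standard one, and it is essentially correct. Smale's theorem gives well-definedness and (via the Birman fibration for the configuration space) the identification $\mathrm{Map}(D^2,\partial D^2,\{x_1,\dots,x_n\})\cong P_n$; the boundary-parallel twists around the inner circles lie in the kernel of the capping map $\mathrm{Map}(D_n,\partial D_n)\to\mathrm{Map}(D^2,\partial D^2,\{x_i\})$; centrality of those twists upgrades the split extension to a direct product. The one place where the argument is too quick is the sentence ``restricting the rel-$\{x_i\}$ isotopy to $D_n$ shows $\phi$ differs from the identity only inside small annular neighborhoods of the inner boundary components.'' An isotopy of $D^2$ rel the points $x_i$ need not preserve the small disks you remove, so it does not literally restrict to an isotopy of $D_n$. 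The correct statement is that, after arranging the isotopy to carry a neighborhood of each $x_i$ into itself, its restriction to each small circle $\partial_i$ is a loop in $\mathrm{Diff}^+(S^1)\simeq SO(2)$, classified by an integer; subtracting that many boundary twists produces an isotopy of $D_n$ rel $\partial D_n$ to the identity. Equivalently one can simply iterate the standard capping-off exact sequence $1\to\mathbb Z\to\mathrm{Map}(S)\to\mathrm{Map}(S',p)\to 1$ once for each inner boundary component, which also supplies the (otherwise unaddressed) injectivity of $\mathbb Z^n\hookrightarrow\mathrm{Map}(D_n,\partial D_n)$. With that repair, your proof is complete and matches the argument the paper implicitly has in mind when it identifies the $\mathbb Z^n$ factor with the inner boundary twists and $\tau_d$ with the full twist $(\sigma_2\sigma_1\sigma_2)^2$ in $P_3$.
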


\begin{figure}[!h]
\centering
\includegraphics[scale=1]{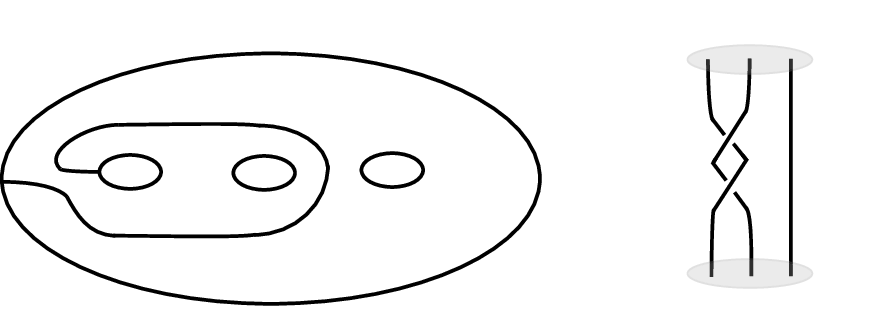} 
\caption{Pure braid associated with a mapping class}
\label{Figure4}
\end{figure}

Note that $n$-holed disk is topologically the same as $n+1$-holed sphere, however the
above isomorphism is meaningful only after choosing a boundary component of the
$n+1$-holed sphere to be identified with the boundary of $D^2$ after filling in the
other boundary components with disks. Nevertheless, such a choice can be made once and
for all. By looking at Figure \ref{Figure2}, we choose the boundary component parallel
to the curve $d$ to correspond to the boundary of $D^2$, and the pure braid will be
obtained by filling the boundary components parallel to the curves $a$, $b$ and $c$,
in addition we choose the ordering of the strands of the pure braid in this order. 

This proposition gives us an alternative way to describe the underlying topological
manifold supported by an open book $(D_n, \phi)$. Namely if $Y$ has an open book
$(D_n, \phi)$, then $Y$ is obtained by Dehn surgery on the braid closure
$\hat{\beta}(\phi)$ of the braid $\beta(\phi)$ with surgery coefficients determined by
the above isomorphism. 

In this article, we study planar open books with four binding components. For the sake of
explicitness, we give a more precise statement of the above discussion for this case. 

Let $S= D_3$ denote a four-holed sphere,  the mapping class group $Map(S,\partial S)$
is not a free abelian group in contrast to the case three-holed sphere , in particular
it has a subgroup isomorphic to $F_2$, the free group on two generators, generated by
Dehn twists around $e$ and $f$ in Figure \ref{Figure2}. In fact, it is a classical
fact that $Map(S,\partial S)$ can be seen as a direct product $\f{Z}^4 \times F_2$
(see \cite{FadellNeuwirth}).  We can see this as follows: Because of Proposition
\ref{braid}, it suffices to see that $P_3$ is $\f{Z} \times F_2$. Recall that $P_3$ is
isomorphic to the fundamental group of the space of triples of distinct points on the
plane (\cite{Fox}). Consider the
forgetful map, from $P_3 \to P_2$, given by forgetting about the middle strand.
$P_2$ is $\f{Z}$ and the kernel of this map is $\pi_1(\f{C} - \{-1,1\}, 0)$,
which is $F_2$. Thus we have a short exact sequence: \[ 0 \to F_2 \to P_3 \to \f{Z} \to 0  \] 
where the kernel is generated by $\sigma_1^2$ and $\sigma_2^2$, and the image is
generated by the central element $(\sigma_2\sigma_1\sigma_2)^2$ which corresponds to a
full right-handed twist of the three-strands. Therefore, any pure 3-braid is expressed
uniquely as $(\sigma_2\sigma_1\sigma_2)^{2\delta} \sigma_1^{2\epsilon_1}
\sigma_2^{2\eta_1} \ldots \sigma_1^{2\epsilon_k} \sigma_2^{2\eta_k}$, where $\delta,
\epsilon_i, \eta_i$ are integers.  

Therefore, under the identification of Proposition \ref{braid} any mapping class $\phi \in Map(S, \partial
S)$ can be represented by: \[ \phi = \tau_a^\alpha \tau_b^\beta \tau_c^\gamma
\tau_d^\delta \tau_e^{\epsilon_1} \tau_f^{\eta_1} \ldots \tau_e^{\epsilon_k}
\tau_f^{\eta_k} \]

and such representation is unique. 

Here $\phi$ is identified with the pure braid $\beta(\phi)
=(\sigma_2\sigma_1\sigma_2)^{2\delta} \sigma_1^{2\epsilon_1} \sigma_2^{2\eta_1} \ldots
\sigma_1^{2\epsilon_k} \sigma_2^{2\eta_k}$, and the integers $(\alpha , \beta,
\gamma)$. For such open books, we have the following proposition as part of
the general discussion above:

\begin{proposition} \label{surgery} Let $S$ be the four-holed sphere and $\phi =
\tau_a^\alpha \tau_b^\beta \tau_c^\gamma \tau_d^\delta \tau_e^{\epsilon_1}
\tau_f^{\eta_1}
\ldots \tau_e^{\epsilon_k} \tau_f^{\eta_k}$ . Let $\epsilon = \sum_{i=1}^k \epsilon_i$
and $\eta = \sum_{i=1}^k \eta_i$. Then the topological manifold $Y$ given by the open
book $(S,\phi)$ can be obtained by Dehn surgery on the braid closure of the pure three braid
$\beta = (\sigma_2\sigma_1\sigma_2)^{2\delta} \sigma_1^{2\epsilon_1} \sigma_2^{2\eta_1} \ldots
\sigma_1^{2\epsilon_k} \sigma_2^{2\eta_k}$, with surgery coefficients $(\alpha+\delta+\epsilon,
\beta+\delta+\epsilon+\eta, \gamma+\delta+\eta)$. \QED \end{proposition}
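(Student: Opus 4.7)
My plan is to derive the proposition as a direct computation from the general surgery description of planar open books set up in the preceding discussion. First I would apply Proposition~\ref{braid} together with the identification $P_3 \cong \langle \Delta^2 \rangle \times F_2$ established in the paragraphs above to decompose $\phi$ uniquely as $\tau_a^\alpha \tau_b^\beta \tau_c^\gamma \tau_d^\delta \prod_i \tau_e^{\epsilon_i} \tau_f^{\eta_i}$, so that its braid part is $\beta(\phi) = (\sigma_2\sigma_1\sigma_2)^{2\delta} \prod_i \sigma_1^{2\epsilon_i} \sigma_2^{2\eta_i}$ and its inner boundary twist data is $(\alpha,\beta,\gamma)$. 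The correspondences $\tau_d \leftrightarrow (\sigma_2\sigma_1\sigma_2)^2$, $\tau_e \leftrightarrow \sigma_1^2$, $\tau_f \leftrightarrow \sigma_2^2$ are obtained by tracking the three hole-centers through the canonical isotopy that untwists $\tilde\tau_d, \tilde\tau_e, \tilde\tau_f$ back to the identity in $D^2$: in each case the disk bounded by the twisting curve and containing the relevant hole-centers gets rotated by one full turn, producing the stated pure braid.

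Next I would invoke the general principle from the discussion preceding the proposition, viewing $\hat\beta(\phi) \subset D^2 \times S^1 \subset S^3$ and realizing $Y = M(D_3, \phi)$ by drilling tubular neighborhoods of the braid strands and refilling them according to the page framing (with the boundary twists adjusting the slopes). The Seifert-framed surgery coefficient on strand $i$ is then $\alpha_i + k_i$, where $\alpha_i \in \{\alpha,\beta,\gamma\}$ is the boundary-twist exponent and $k_i$ is the self-linking of strand $i$ with its parallel push-off in a fixed direction of $D^2$.

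The remaining task is to compute $k_i$ explicitly. Since pushing by a fixed vector in $D^2$ is additive over concatenated braid words, $k_i$ decomposes as a sum of contributions coming from the consecutive pieces of $\beta(\phi)$. The full twist $(\sigma_2\sigma_1\sigma_2)^{2\delta}$ realizes each of the three strands as a $(1,\delta)$-torus knot on a concentric torus of the standard $D^2 \times S^1 \subset S^3$, with its push-off a parallel $(1,\delta)$-torus knot on a nearby torus, contributing $\delta$ to the self-framing of every strand. Each factor $\sigma_1^{2\epsilon_i}$ leaves strand 3 alone and contributes $\epsilon_i$ to the self-framings of strands 1 and 2, while each factor $\sigma_2^{2\eta_i}$ analogously contributes $\eta_i$ to strands 2 and 3. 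Summing gives $(k_1,k_2,k_3) = (\delta + \epsilon,\, \delta + \epsilon + \eta,\, \delta + \eta)$, and adding $(\alpha,\beta,\gamma)$ produces the triple in the statement.

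The main obstacle will be this last self-linking computation. The delicate point is that a full twist of $k$ strands contributes $+1$ (and not, say, $+(k-1)$) to each individual strand's self-framing: because the push-off translates rigidly with the strand throughout the isotopy, the strand and its push-off remain parallel torus knots of the same slope, whose linking number equals exactly the number of full twists applied.
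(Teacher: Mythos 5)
Your proposal is correct and follows exactly the route the paper gestures at: the paper gives no written proof (it cites the ``general discussion above'' and \cite{prasolov} and ends with \QED), namely realizing $Y$ as Dehn surgery on $\hat\beta(\phi)\subset D^2\times S^1\subset S^3$ with slope on strand $i$ equal to boundary-twist exponent plus page framing. Your explicit self-linking computation --- $\delta$ from the full twist to each strand, $\epsilon_i$ to strands 1,2 from $\sigma_1^{2\epsilon_i}$, $\eta_i$ to strands 2,3 from $\sigma_2^{2\eta_i}$, all additive over concatenation --- correctly fills in the detail the paper leaves implicit, and the sign/convention choices are consistent with how Proposition~\ref{surgery} is applied later to the $(-1,-2,-4)$ chain-link surgery.
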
  

\subsection{Planar open books on the three-sphere}

We would like to construct planar open books with four boundary components on $S^3$.
We will look for planar open books with simple monodoromy of the form $\phi =
\tau_a^\alpha \tau_b^\beta \tau_c^\gamma \tau_d^\delta \tau_e^\epsilon \tau_f^\eta$.
In light of Proposition \ref{surgery}, we would like to know when a surgery on a
braid closure of a pure three-braid of the form $\beta=(\sigma_2 \sigma_1
\sigma_2)^{2\delta} \sigma_1^{2\epsilon} \sigma_2^{2 \eta}$ yields $S^3$. Fortunately,
this question is completely resolved by Armas-Sanabria and Eudave-Mu$\tilde{\text{n}}$oz in
\cite{eudave} by depending on deep results on
Dehn surgery on knots. In particular, the authors list several infinite families.
Therefore, we can describe precisely when an open book $(S, \tau_a^\alpha \tau_b^\beta
\tau_c^\gamma \tau_d^\delta \tau_e^\epsilon \tau_f^\eta)$ is an open book on $S^3$.

From the list provided in \cite{eudave} we pick a convenient family. By using Kirby
calculus, we will verify independently that these indeed give $S^3$, and our next task
is to calculate the $d_3$ invariants of the contact structures supported by the
corresponding open books. The difficulty is that we would like to see that any value
in $\f{Z}+\frac{1}{2}$ can be achieved. We will apply several tricks to ensure this.
Therefore, as a consequence of these calculations, we show that every contact
structure on $S^3$ is supported by an open book with a planar page with at most four
binding components. 

\emph{Proof of Theorem \ref{theorem1} : }  We will start with the braid $\beta =
(\sigma_2 \sigma_1 \sigma_2)^2 \sigma_1^{-4} \sigma_2^{-4}$. Figure
\ref{Figure5} is a picture of the closure of this braid, also known as the chain link.
The hyperbolic structure on its complement was first constructed by Thurston in his
notes \cite{thurston}, and this manifold has been called as the ``magic manifold'' by
Gordon and Wu \cite{gordon} \cite{gordonwu} as one gets most of the hyperbolic manifolds and
most of the interesting non-hyperbolic fillings of cusped hyperbolic manifolds (see
\cite{martelliPetronio} for a classification of \emph{all} exceptional surgeries on
this link). It is the smallest known hyperbolic manifold with 3 cusps of smallest
known volume and complexity \cite{adamsSherman}.

It is easy to see by blowing down twice that $(-1, -2, -4)$ surgery on this link yields
$S^3$ (see below for the more general case). 

\begin{figure}[!h]
\centering
\includegraphics[scale=0.6]{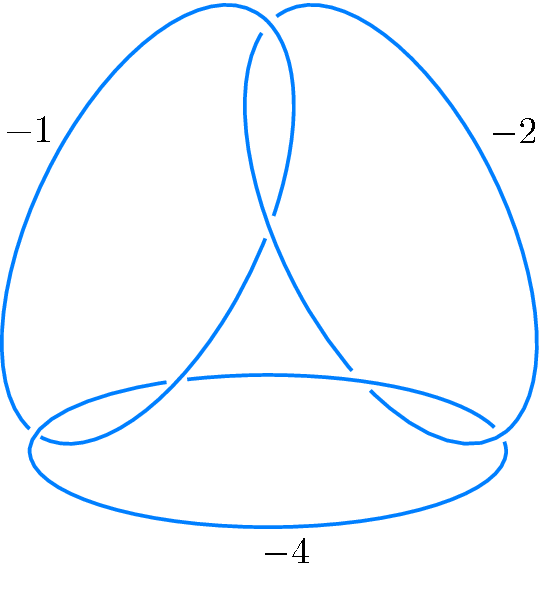} 
\caption{Surgery on the chain link}
\label{Figure5}
\end{figure}

Therefore, by our Proposition \ref{surgery}, it follows that the open book with page
$S$, a four-holed sphere, and $\phi = \tau_a^\alpha \tau_b^\beta \tau_c^\gamma \tau_d
\tau_e^{-2} \tau_f^{-2}$ is an open book on $S^3$ when \[(\alpha-1, \beta-3,
\gamma-1) = (-1,-2,-4) \]

More generally, consider the braid $\beta =(\sigma_2 \sigma_1 \sigma_2)^2
\sigma_1^{-2p} \sigma_2^{-4}$ and perform Dehn surgery with coefficients $(1-p, -p ,
-4)$. In Figure \ref{Figure6}, we verify that we still obtain $S^3$. 

\begin{figure}[!h]
\centering
\includegraphics[scale=0.7]{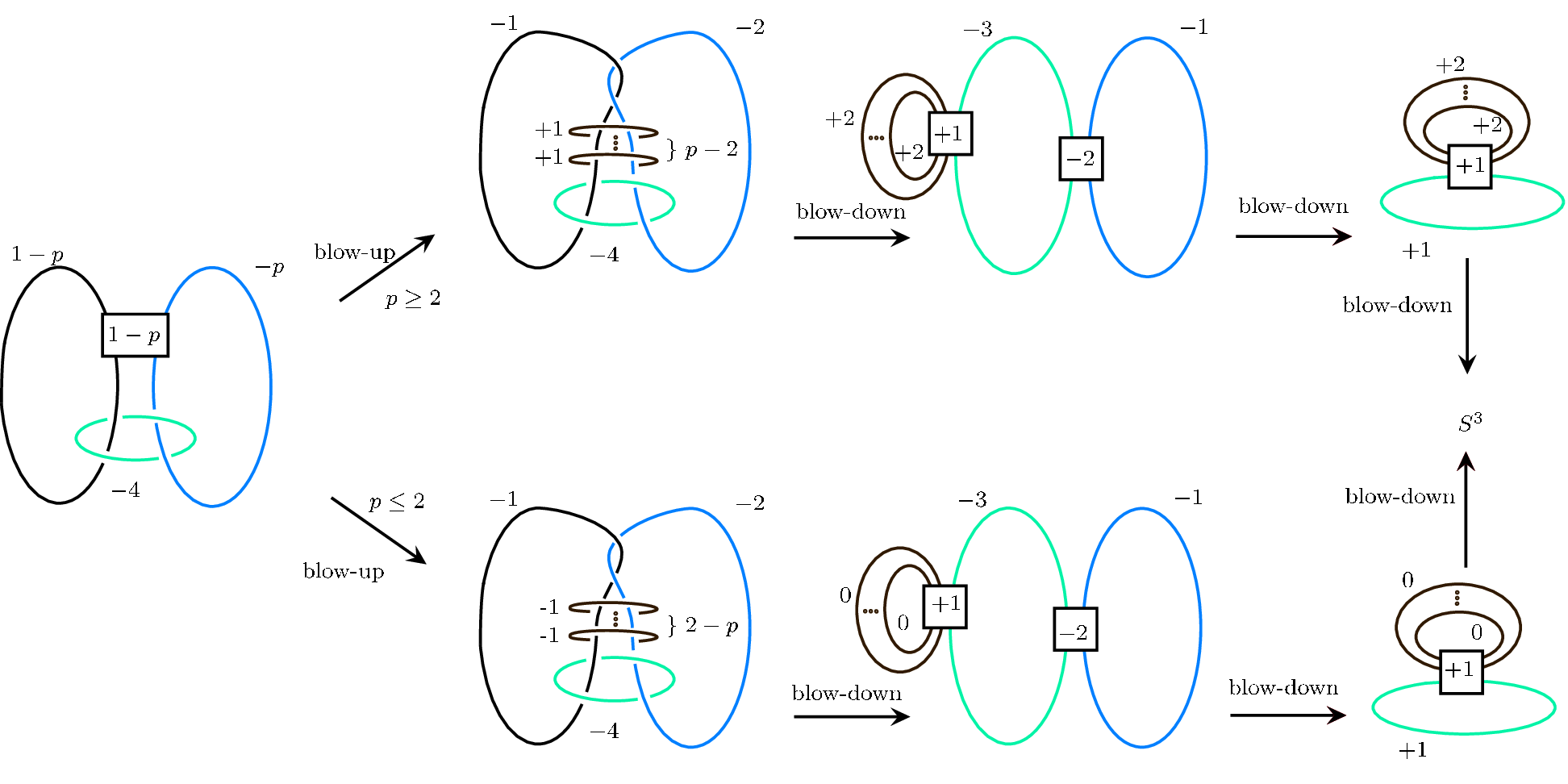} 
\caption{Surgery on a family of links yields $S^3$}
\label{Figure6}
\end{figure}

By reflecting (which amounts to changing orientation), we also know that Dehn surgery
on $\beta =(\sigma_2 \sigma_1 \sigma_2)^{-2} \sigma_1^{2p} \sigma_2^{4}$ with
coefficients $(p-1, p ,4)$ also yields $S^3$.

After some experimentation, the author found that the following two families of open
books (which are obtained from one another by reflecting the braid as above) will be
sufficient for our purposes. (Note that reflecting the braid amounts to changing
the orientation, but since $S^3$ has an orientation reversing diffeomorphism, this
will still give an open book on $S^3$. Though, as we will see below the supported
contact structure will change!)

It will suffice to consider the following two possibilities:
\begin{eqnarray*}
\phi_p &=& \tau_b \tau_c^{-3} \tau_d \tau_e^{-p} \tau_f^{-2} \\
\bar{\phi}_p &=& \tau^{-1}_b \tau_c^{3} \tau^{-1}_d \tau_e^p \tau_f^2\\
\end{eqnarray*}
We will denote the supported contact structures by $\xi_p$ and $\bar{\xi}_p$. Note
that in both open books the monodromies have boundary parallel negative Dehn twists,
it is easy to see that in this case, the monodromies are not right-veering. Therefore, the supported contact structures are
overtwisted.
  
To determine the contact structures, following the description in \cite{EtOz} (see
also \cite{balEt}), we will next compute the $d_3$ invariants of the supported
contact structures from the monodoromy data of the open books. First, we briefly
review the strategy, for more details see $\cite{EtOz}$.  Given $\phi$ as a product
Dehn twists around homologically non-trivial curves $a_1, \ldots, a_k$ on a planar
surface $S$ with $n$ boundary components, one first constructs the Stein manifold $S
\times D^2$ in a standard way by attaching $n$ one-handles to $D^4$ along Legendrian
unknots, then one attaches $2$-handles along Legendrian realizations of $a_i$ on $S$
with $\pm 1 $ framing depending on whether the Dehn twist around $a_i$ is negative or
positive. Let $W$ be thus constructed $4$-manifold, then the contact manifold
$(Y,\xi)$ supported by the open book $(S,\phi)$ is the boundary of $W$. As long as
$c_1(\xi)=0$ (or more generally a torsion class) in $H^2(Y)$, $d_3(\xi)$ is an element of
$\f{Q}$ and may be computed by the formula, \[ d_3(\xi) = \frac{1}{4} (c^2(W) -
2\chi(W) -3 \sigma(W)) + q \]

where $q$ is the number of negative Dehn twists in the factorization of $\phi$.
Furthermore, $c^2(W)$ is the square of the class $c(W) \in H^2(W)$ which is Poincar\'e
dual to the class $\Sigma_{i=1}^k \text{rot}(a_i) C_i \in H_2(W,Y)$, where $C_i$ is
the cocore of the 2-handle attached along $a_i$ and $\text{rot}(a_i)$ is the rotation
number of $a_i$ which can be computed as the winding number of $a_i$ with respect to a
standard trivialization of the tangent bundle of the page. Since we assume
$c_1(\xi)=0$ , $c(W)$ maps to zero under the natural map $H^2(W) \to H^2(Y)$, hence it
comes from class in $H^2(W,Y)$ whose square is $c^2(W)$ that appear in the formula
above. 

Now, Figure \ref{Figure7} is a Kirby diagram for the page of a planar open book with
four boundary components. We drew all the curves $a,b,c,d,e$ and $f$ that appear in
the above monodromies. 

\begin{figure}[!h]
\centering
\includegraphics[scale=0.7]{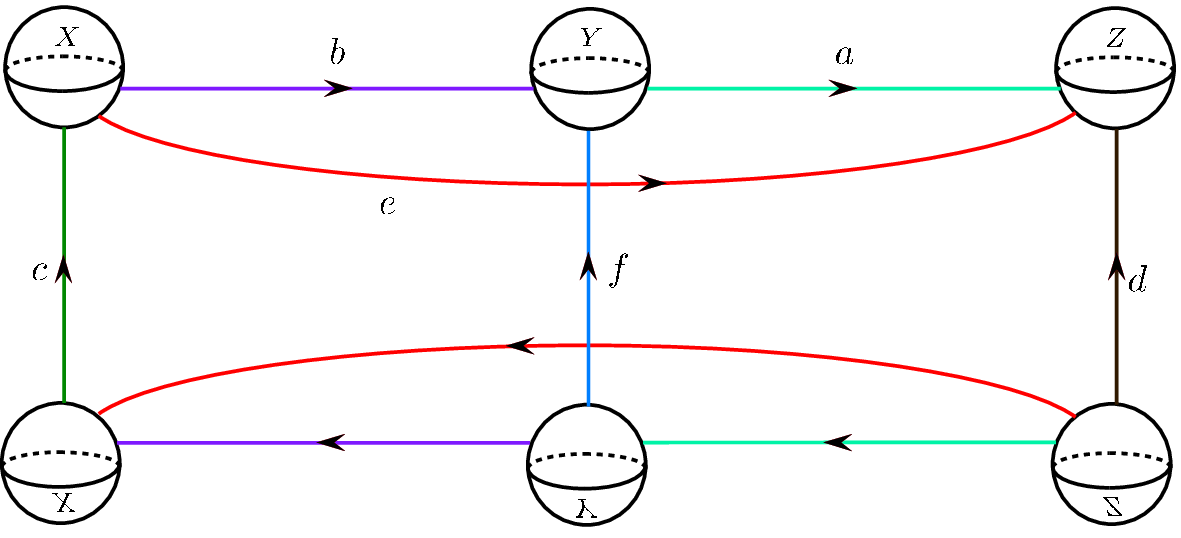} 
\caption{The diagram of the page}
\label{Figure7}
\end{figure}

In order to compute the rotation numbers, we chose an orientation of the curves (note
that the computation of $c^2(W)$ is independent of this choice). One then computes the winding numbers of these curves to get:
\begin{eqnarray*}
\text{rot}(a)=\text{rot}(b)=\text{rot}(e)=1\\ 
\text{rot}(c)=\text{rot}(f)=\text{rot}(d)=0
\end{eqnarray*}
The rest of the proof is a direct homology calculation based on the descriptions
above.

\ \\ 

{\bf Computation of $d_3(\xi_p)$ and $d_3(\bar{\xi}_p)$}

Let $X$, $Y$, and $Z$ be the $1$-handles, which form a basis of $C_1(W; \f{Z})$. Let
$B$, $\{C_1,C_2,C_3\}$, $D$, $\{E_1,E_2, \ldots E_{|p|}\}$, $\{F_1,F_2\}$ be the cores of the
handles attached corresponding to the factorization $\phi_p = \tau_b \tau_c^{-3}
\tau_d \tau_e^{-p} \tau_f^{-2}$. These form a basis of $C_2(W; \f{Z})$ and the
boundary map can be read off the diagram in Figure \ref{Figure7} to be :
\begin{eqnarray*}
d(B) &=& Y-X \ \ d(C_i) = X \ \ \\ 
d(D) &=& Z \ \ d(D-E_i) = X \ \ d(F_i) = Y \ \ 
\end{eqnarray*}
Thus, $H_1(W)=0$ and $H_2(W) = \f{Z}^{|p|+4}$. It will be convenient to pick the following
basis of generators: 
\begin{eqnarray*} \{ C_1 - D + E_{|p|} , C_1 - D + E_{|p|-1}, \ldots, C_1 -D+E_1 , B+C_1 -
F_1 ,B+C_2 -F_1, \\ B+C_3 - F_1 , F_2 - F_1 \}
\end{eqnarray*}

Note that we have $B^2 = D^2=-1$ and $C_1^2=C_2^2=C_3^2=F_1^2= F_2^2= 1$ and $E_i^2 = \sgn(p) $ and any cross term intersection number is zero. The intersection matrix
takes particularly nice form if we add or subtract the first $p$ elements in the above
basis to the $(p+1)^{th}$ element according to whether $p$ is negative or postive. So, our new basis is given by \begin{eqnarray*} \{ C_1 -
D + E_{|p|} , C_1 - D + E_{|p|-1}, \ldots, C_1 -D+E_1 , \\ B+(1-p)C_1+ pD - F_1
- \sgn(p) E_1 -\ldots - \sgn(p) E_{|p|} , \\ B+C_2-F_1, B+C_3 - F_1, F_2 -F_1 \} 
\end{eqnarray*}
Therefore, the intersection matrix of $W$ in the this basis can be calculated to be: 
\[
 Q_W =
 \begin{pmatrix}
  \sgn(p) & 0       & \cdots    & \cdots & \cdots & \cdots & 0  \\
   0        & \ddots  & 0         & \cdots & \cdots & \cdots & 0\\
  \vdots    & 0       & \sgn(p) & 0      & \cdots & \cdots & 0  \\
  \vdots    & \vdots  & 0         & 1-p & 0 &0 & 1 \\
  \vdots    & \vdots  & \vdots    & 0  & 1&0 & 1 \\
   \vdots   & \vdots  & \vdots    & 0  & 0& 1& 1 \\
   0        & 0       & 0         & 1 & 1 &1 &2
 \end{pmatrix}
\]
From this one can easily compute that $\sigma(W)=2+p$, and also we know that
$\chi(W)=|p|+5$. To compute $c^2(W)$, let us denote the cocores by $\check{B}$,
$\{\check{C}_1,\check{C}_2,\check{C}_3\}$, $\check{D}$, $\{\check{E}_1, \ldots
\check{E}_p\}$, $\{\check{F}_1,\check{F}_2\}$. Then from the calculation of rotation
numbers it follows that $c(W)$ is Poincar\'e dual to \[ \check{B}+ \check{E}_1 +
\ldots+ \check{E}_p \] Evaluating $c(W)$ on our basis of $H_2(W)$, we get the vector
$(1,\ldots,1,1-p,1,1,0 )$ hence the Poincar\'e dual to the pull back of $c(W)$ to
$H^2(W,Y)$ is given by $(1,\ldots,1,1-p,1,1,0)^t \cdot (Q_W)^{-1}$, which one can
calculate to be:
\[
 \begin{pmatrix}
  \sgn(p) & 0       & \cdots    & \cdots & \cdots & \cdots & 0  \\
   0        & \ddots  & 0         & \cdots & \cdots & \cdots & 0\\
  \vdots    & 0       & \sgn(p) & 0      & \cdots & \cdots & 0  \\
  \vdots    & \vdots  & 0         & 0 & -1 &-1 & 1 \\
  \vdots    & \vdots  & \vdots    & -1  & p& -1+p & 1-p \\
   \vdots   & \vdots  &  \vdots    & -1  & -1+p & p& 1-p \\
   0        & 0       & 0         & 1 & 1-p &1-p &-1+p
 \end{pmatrix}
\begin{pmatrix}
1 \vspace{.03in} \\ \vdots \\ 1 \vspace{.07in}
\\ 1-p \vspace{.07in} \\ 1 \vspace{.05in} \\ 1 \vspace{.05in} \\0
\end{pmatrix}
\]
Hence \begin{eqnarray*} c^2(W)&=&
(\sgn(p),\ldots,\sgn(p), -2, -2+3p, -2+3p, 3-3p) \cdot (1,\ldots,1,1-p,1,1,0) \\ &=&
9p-6 \end{eqnarray*}
The number of negative Dehn twists is given by $q(W)= 5 + \frac{|p|+p }{2} $.   
Finally,  we compute: \[ d_3(\xi_{p} ) = \frac{1}{4} ( 9p-6 - 2 (|p|+5)
-3(2+p)) + 5 + \frac{|p|+p}{2} = 2p - \frac{1}{2} \] 

This only covers half of the overtwisted contact structures on $S^3$, to get the other
half, we consider $\bar{\xi}_p$. Note, that $\bar{\xi}_p$ is obtained by orientation
reversal. Therefore, we do not need to compute all the above invariants from scratch. Namely,
we have:
\begin{eqnarray*}
c^2(-W) &=& - c^2(W) = -9p+6 \\
\chi(-W) &=& \chi(W) = |p|+5 \\
\sigma(-W) &=& -\sigma(W) = -p-2 \\
q(-W) &=& 2 + \frac{|p|-p}{2} 
\end{eqnarray*}

Therefore, we have:
\[ d_3(\bar{\xi}_p) = \frac{1}{4} (-9p+6 -2(|p|+5) -3(-p-2)) + 2 + \frac{|p|-p}{2} = -2p
+ \frac{5}{2} \]

We determined the binding number of all the overtwisted contact structures. The proof
of Theorem \ref{theorem1} will be completed once we determine the support norm of the
overtwisted contact structures. Note that because all of the overtwisted contact
structures are supported by a planar open book with page a four-holed sphere, we have
that $sn(\xi) \leq 2$ for all $\xi$. We also know that if $d_3(\xi) \neq -\frac{1}{2},
\frac{1}{2}, \frac{3}{2}$, then $bn(\xi)=4$, therefore the only way for these contact
structures to have support norm strictly less than $2$ is when they are supported by
an open book with page a torus with one boundary component. Now, recall the well-known
fact that the only genus one fibred knots on $S^3$ are trefoil and figure-eight knot
and the corresponding open books have monodromy $\tau_a^{\pm 1} \tau_b^{\pm 1}$, where
$a$ and $b$ are standard generators of the homology of the torus, (this follows from
for example \cite{magnusPeluso}). It is now easy to see that $\tau_a\tau_b$,
$\tau_a\tau_b^{-1}$, $\tau_a^{-1}\tau_b$ are obtained by positively stabilizing the
open books with annulus page supporting the
unique tight contact structure, and the overtwisted contact structure
$\xi_{\frac{1}{2}}$, and $\tau_a^{-1}\tau_b^{-1}$ is obtained by negatively
stabilizing $\xi_{\frac{1}{2}}$, hence corresponds to $\xi_{\frac{3}{2}}$. This
completes the proof of Theorem \ref{theorem1}. \QED

\ 

\begin{minipage}[b]{0.7\linewidth}
\centering
\begin{tabular}{c|c|c|c|c|c|}
 \cline{2-6} 
$ $&\small$n<-\frac{1}{2}$& \small$n=-\frac{1}{2}$&\small$n=\frac{1}{2}$
&\small$n=\frac{3}{2}$ &\small$n>\frac{3}{2}$ 
\cr\hline
 \multicolumn{1}{|c |}{\small$bn$}  & \small$4$ & \small$3$ &  \small$2$ & \small$3$ & \small $4$  \cr \hline
  \multicolumn{1}{|c|}{\small$sn$} & \small$2$ & \small$1$ & \small$0$ & \small$1$
&\small $2$\cr \hline
  \multicolumn{1}{|c|}{\small$sg$} & \small$0$ & \small$0$ & \small$0$ &\small$0$
& \small $0$ \cr
\hline
\end{tabular}
\end{minipage}
\hspace{0.5cm}
\begin{minipage}[b]{0.1\linewidth}
\centering
    \begin{tabular}{|c|}
\cline{1-1}
       {\small $\xi_{st}$}
\cr\hline
\multicolumn{1}{|c|} {\small$1$ } 
\cr\hline
\multicolumn{1}{|c|} {\small$-1$}
\cr\hline
\multicolumn{1}{|c|} {\small $0$}
\cr \hline
\end{tabular}
\end{minipage}

\section{Positive factorizations}

In this section we give a proof of Theorem \ref{theorem3} in the following two propositions. 
Recall that for the four-holed sphere $S$, $Map(S,\partial S) = \f{Z}^4 \times F_2$.
The first homology is $H_1(Map(S,\partial S)) = \f{Z}^6$ where the class of a general
element $\phi= \tau_a^\alpha \tau_b^\beta \tau_c^\gamma \tau_d^\delta
\tau_e^{\epsilon_1} \tau_f^{\eta_1} \ldots \tau_e^{\epsilon_k} \tau_f^{\eta_k} $ is
given by $(\alpha,\beta,\gamma,\delta, \sum_{i=1}^k \epsilon_k , \sum_{i=1}^k \eta_k )
$. We will prove the following proposition:

\begin{proposition}

Let $\phi = \tau_a^\alpha \tau_b^\beta \tau_c^\gamma \tau_d^\delta \tau_e^\epsilon
\tau_f^\eta$, then $\phi$ admits a positive factorization if and only if $\min\{\alpha,\beta,\gamma,\delta\} \geq \max \{-\epsilon, -\eta, 0\}$

\end{proposition}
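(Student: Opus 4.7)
The plan is to prove both directions by working in the splitting $Map(S,\partial S) \cong \f{Z}^4 \times F_2$ and exploiting the lantern relation on the four-holed sphere. Let $g$ denote the third non-peripheral simple closed curve on $S$, distinct from $e$ and $f$. With the cyclic ordering dictated by Figure \ref{Figure2}, the lantern relation takes the form $\tau_g \tau_f \tau_e = \tau_a \tau_b \tau_c \tau_d$; centrality of the boundary Dehn twists yields the two further cyclic reorderings, and projecting to $F_2 = \langle \tau_e, \tau_f \rangle$ gives $\pi(\tau_g) = e^{-1} f^{-1}$.

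For the necessary direction, I would pass to $H_1(Map(S,\partial S)) \cong \f{Z}^6$. In the basis $\{[\tau_a],[\tau_b],[\tau_c],[\tau_d],[\tau_e],[\tau_f]\}$, the lantern gives $[\tau_g] = [\tau_a]+[\tau_b]+[\tau_c]+[\tau_d]-[\tau_e]-[\tau_f]$. Dehn twists around simple closed curves in the same $Map(S,\partial S)$-orbit are conjugate and thus have the same $H_1$-class, so any positive factorization $\phi = \prod_i \tau_{c_i}$ partitions its factors into seven types with non-negative integer counts $N_a, N_b, N_c, N_d, N_e, N_f, N_g$. Matching the coefficients of $[\phi]$ produces the equations $\alpha = N_a + N_g$, $\beta = N_b + N_g$, $\gamma = N_c + N_g$, $\delta = N_d + N_g$, $\epsilon = N_e - N_g$, $\eta = N_f - N_g$; non-negativity of all the $N_x$'s forces $\max\{-\epsilon,-\eta,0\} \leq N_g \leq \min\{\alpha,\beta,\gamma,\delta\}$, which is the desired inequality.

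For the sufficient direction, I would construct a positive factorization explicitly using the two derived identities $(\tau_g \tau_f)^k = \tau_a^k \tau_b^k \tau_c^k \tau_d^k \tau_e^{-k}$ and $(\tau_e \tau_g)^k = \tau_a^k \tau_b^k \tau_c^k \tau_d^k \tau_f^{-k}$, both following from the lantern together with the centrality of the boundary Dehn twists. When $\epsilon,\eta \geq 0$, $\phi$ is already a positive product; when exactly one of $\epsilon,\eta$ is negative, a single application of the corresponding identity suffices (for instance, if $\epsilon = -k < 0 \leq \eta$, one writes $\phi = \tau_a^{\alpha-k} \tau_b^{\beta-k} \tau_c^{\gamma-k} \tau_d^{\delta-k} (\tau_g \tau_f)^k \tau_f^\eta$, which is positive by hypothesis).

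The main obstacle is the case $\epsilon = -k_1 < 0$, $\eta = -k_2 < 0$, since the hypothesis yields only $\min\{\alpha,\beta,\gamma,\delta\} \geq \max\{k_1,k_2\}$, whereas a naive two-step application of the identities would demand the stronger bound $\geq k_1 + k_2$. The key auxiliary identity, which I would establish by a short induction on $k_2$ carried out directly in $\f{Z}^4 \times F_2$, is
\[
\prod_{j=0}^{k_2-1} \tau_f^j \tau_g \tau_f^{-j} \;=\; \tau_a^{k_2} \tau_b^{k_2} \tau_c^{k_2} \tau_d^{k_2} \tau_e^{-k_2} \tau_f^{-k_2},
\]
where each conjugate $\tau_f^j \tau_g \tau_f^{-j} = \tau_{\tau_f^j(g)}$ is itself a single positive Dehn twist around a simple closed curve on $S$. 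Assuming without loss of generality that $k_1 \geq k_2$ and combining with $(\tau_g \tau_f)^{k_1 - k_2}$, one obtains the positive factorization
\[
\phi \;=\; \tau_a^{\alpha-k_1} \tau_b^{\beta-k_1} \tau_c^{\gamma-k_1} \tau_d^{\delta-k_1} \cdot (\tau_g \tau_f)^{k_1-k_2} \cdot \prod_{j=0}^{k_2-1} \tau_f^j \tau_g \tau_f^{-j},
\]
valid precisely under the stated hypothesis.
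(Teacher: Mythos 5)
Your proof is correct and follows essentially the same strategy as the paper: the necessity direction is the identical abelianization argument in $H_1(Map(S,\partial S)) \cong \f{Z}^6$ using the fact that every Dehn twist class is conjugate to one of $\tau_a,\ldots,\tau_g$ together with the lantern relation, and the sufficiency direction is the same positive factorization built from the lantern and conjugates of $\tau_g$, the only cosmetic difference being that you conjugate $\tau_g$ by powers of $\tau_f$ while the paper conjugates by powers of $\tau_e$ and organizes the intermediate algebra as a short induction rather than case-splitting.
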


\begin{proof} Suppose $\phi = \tau_{C_1} \ldots \tau_{C_k}$ is a positive
factorization in $Map(S, \partial S)$. We consider the quotient relation in
$H_1(Map(S,\partial S))=\f{Z}^{6}$. 
\[ (\alpha, \beta,\gamma, \delta , \epsilon, \eta) = [\tau_{C_1}] \ldots [\tau_{C_k}]
\] 
Now, by topological classification of surfaces observe that any simple closed curve
$C_i$ is conjugate in $Map(S,\partial S)$ to one of the curves $a,b,c,d,e,f$ or $g$ in
Figure \ref{Figure2}. The classes of Dehn twists around these curves in
$H_1(Map(S,\partial S))$ are given by $[\tau_a]= (1,0,0,0,0,0), [\tau_b]= (0,1,0,0,0,0),
[\tau_c]=(0,0,1,0,0,0), [\tau_d]=(0,0,0,1,0,0), \tau_e=(0,0,0,0,1,0), [\tau_f]=
(0,0,0,0,0,1)$ and $[\tau_g]=(1,1,1,1,-1,-1)$. For the latter, observe that by the
lantern relation we have $\tau_g= \tau_a \tau_b \tau_c \tau_d \tau_e^{-1}
\tau_f^{-1}$. Let us denote by $e_i \in \f{Z}^6$ to be the vector with $i^{th}$
coordinate $1$ and other coordinates $0$ and let $n=(1,1,1,1,-1,-1)$. Therefore, each
class $[\tau_{C_i}]$ is equal to either some $e_j$ or $n$. Now, if $\phi$ has positive
factorization then 
\[ (\alpha, \beta,\gamma, \delta , \epsilon, \eta) = p_0 n + \sum_{i=1}^6 p_i e_i \]

for some $p_i \geq 0$. Thus if $\epsilon$ or $\eta$ is negative, $p_0 \geq 
\max\{-\epsilon, -\eta\}$, which shows that $\min\{\alpha, \beta, \gamma,\delta\} \geq
\max \{-\epsilon, -\eta, 0\}$ as desired.

Conversely, if $\epsilon , \eta \geq 0$, then the given factorization is positive as
long as $ \min\{\alpha,\beta,\gamma,\delta\} >0$. Without loss of generality, suppose
next  that $\epsilon < 0$  and $\eta= \epsilon + r$ for $r\geq 0$. We have
$\min\{\alpha,\beta,\gamma,\delta\} \geq -\epsilon$, set $-\epsilon = k>0$. Then by
using the lantern relation $k$ times, we obtain the central element
$(\tau_f\tau_e\tau_g)^k$. We first use this to kill the negative powers of $f$, to
get:\begin{eqnarray*}
 \phi &=& \tau_a^{\alpha-k} \tau_b^{\beta-k} \tau_c^{\gamma -k } \tau_d^{\delta-k}
\tau_e^{-k} \tau_f^{-k} (\tau_f \tau_e \tau_g)^k \tau_f^r \\
 &=& \tau_a^{\alpha-k} \tau_b^{\beta-k} \tau_c^{\gamma -k } \tau_d^{\delta-k}
\tau_e^{-k} \tau_f^{-k} \tau_f^k (\tau_e \tau_g)^k \tau_f^r \\ 
 &=& \tau_a^{\alpha-k} \tau_b^{\beta-k} \tau_c^{\gamma -k } \tau_d^{\delta-k}
\tau_e^{-k+1} \tau_g (\tau_e \tau_g)^{k-1} \tau_f^r 
\end{eqnarray*}
The proof will be completed once we show that $\tau_e^{-k+1} \tau_g (\tau_e
\tau_g)^{k-1}$ has a positive factorization. We do this by induction and using the
well-known fact that if $f : S\to S$ a diffeomorphism and $C$ a simple closed curve
then the equality $\tau_{f(C)} = f^{-1} \tau_C f$ holds. For $k=1$, the expression is
equal to $\tau_g$ so it is positive. We write \begin{eqnarray*} \tau_e^{-k+1} \tau_g
(\tau_e \tau_g)^{k-1} &=& \tau_e^{-k+1} \tau_g \tau_e^{-(-k+1)} \tau_e^{-k+2} \tau_g
(\tau_e \tau_g)^{k-2} \\ &=& \tau_{\tau_e^{k-1}(g)} \tau_e^{-k+2} \tau_g (\tau_e
\tau_g)^{k-2}   \end{eqnarray*}The latter expression is positive by induction
hypothesis, which completes the proof. In fact, we can simply see that 
\[ \phi =  \tau_a^{\alpha-k} \tau_b^{\beta-k} \tau_c^{\gamma -k } \tau_d^{\delta-k}
\tau_{\tau_e^{k-1}(g)} \tau_{\tau_e^{k-2}(g)} \ldots \tau_{\tau_e(g)}
\tau_g \tau_{f}^r  \]
\end{proof}
\begin{remark}
Suppose more generally that $\phi= \tau_a^\alpha \tau_b^\beta \tau_c^\gamma \tau_d^\delta
\tau_e^{\epsilon_1} \tau_f^{\eta_1} \ldots \tau_e^{\epsilon_k} \tau_f^{\eta_k} $. Let
$\epsilon = \sum_{i=1}^k \epsilon_k $ and $\eta = \sum_{i=1}^k \eta_k$. Then 
the same argument using $H_1(Map(S,\partial S))$ gives that $\phi$ has a positive
factorization only if $\min \{ \alpha , \beta, \gamma, \delta \} \geq \max
\{-\epsilon, -\eta , 0 \} $. However, it is easy to see that this is not sufficient.
For example, $\tau_e \tau_f \tau_e^{-1} \tau_f^{-1}$ satisfies this condition,
but one can check that this is not a right-veering monodoromy hence cannot be
written as a product of right-handed Dehn twists (by \cite{HKM1} the supported contact
structure can not even be tight). On the other hand, the
argument given in the above proof clearly gives a positive factorization of $\phi$ if
$\min\{ \alpha,\beta,\gamma,\delta\} \geq \sum_{i=1}^k \max \{ \epsilon_i , \eta_i,
0\}$.

\end{remark}

We next determine whether or not the Ozsv\'ath-Szab\'o contact invariant vanishes for
these contact structures. In our case, it turns out that this is equivalent to whether
or not the contact structure is Stein fillable. 

\begin{proposition}
Let $\phi = \tau_a^\alpha \tau_b^\beta \tau_c^\gamma \tau_d^\delta \tau_e^\epsilon
\tau_f^\eta$, then the contact invariant $c^+(\phi)$ is non-zero if and only if $\min\{\alpha,\beta,\gamma,\delta\} \geq \max \{-\epsilon, -\eta, 0\}$
\end{proposition}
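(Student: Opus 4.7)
The forward direction of the biconditional ($\min\geq\max\Rightarrow c^+\neq 0$) is immediate from the preceding proposition together with standard fillability results. Given the inequality, that proposition produces a positive factorization of $\phi$, which yields an allowable Lefschetz fibration over $D^2$ with page $S$, bounding a Stein filling of $(Y,\xi_\phi)$ by the Loi--Piergallini correspondence (refined by Akbulut--Ozbagci). Plamenevskaya's theorem then gives $c^+(\phi)\neq 0$.

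For the reverse direction I argue the contrapositive, splitting according to how the inequality fails. In the first case, $\min\{\alpha,\beta,\gamma,\delta\}<0$; without loss of generality $\alpha<0$, so the unique normal form of $\phi$ in $\mathrm{Map}(S,\partial S)=\mathbb{Z}^4\times F_2$ contains the factor $\tau_a^{\alpha}$, a negative boundary-parallel Dehn twist. As in the proof of Theorem~\ref{theorem1} for the monodromies $\phi_p$ and $\bar{\phi}_p$, such a factor forces $\phi$ to fail to be right-veering; by Honda--Kazez--Mati\'c \cite{HKM1}, $\xi_\phi$ is then overtwisted and $c^+(\phi)=0$.

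The second case, in which $\min\{\alpha,\beta,\gamma,\delta\}\geq 0$ but (without loss of generality) $\epsilon<-\min\{\alpha,\beta,\gamma,\delta\}$, is the main obstacle of the proof. Here no negative boundary-parallel factor appears in the normal form, so $\phi$ may well be right-veering and the first case's argument does not apply directly. The strategy I would pursue is to use the lantern relation $k=\min\{\alpha,\beta,\gamma,\delta\}$ times (as in the previous proposition) to rewrite $\phi$ in the equivalent form
\[
\tau_b^{\beta-k}\tau_c^{\gamma-k}\tau_d^{\delta-k}(\tau_g\tau_f\tau_e)^k\tau_e^{\epsilon}\tau_f^{\eta}.
\]
Since $\epsilon+k<0$, this alternate presentation still carries an unavoidable negative contribution along $e$ that cannot be absorbed by further lantern manipulations. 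From here, one may either pass to a stabilized open book supporting the same contact structure $\xi_\phi$ whose normal form contains a negative boundary-parallel twist (reducing to the first case), or else use the contact Dehn surgery description of $(Y,\xi_\phi)$ from Proposition~\ref{surgery} to identify an overtwisted disk directly in the resulting surgery diagram. Either approach shows $\xi_\phi$ is overtwisted, hence $c^+(\phi)=0$.
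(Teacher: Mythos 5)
Your forward direction (positive factorization $\Rightarrow$ Stein fillable $\Rightarrow$ $c^+\neq 0$) and your first case of the reverse direction (some boundary-parallel exponent is negative, so the normal form is visibly not right-veering) are both correct and match the paper.

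For the second case, which you rightly flag as the main obstacle, there is a genuine gap. You sketch two strategies (stabilize to exhibit a negative boundary-parallel twist, or locate an overtwisted disk from the surgery description) but carry out neither, and both are problematic for a more basic reason: they aim to prove that $\xi_\phi$ itself is overtwisted, which is strictly stronger than what the proposition asserts and is in fact out of reach in general. The remark following the proof of Theorem~\ref{theorem2} notes that the monodromies $\tau_a^{k+1}\tau_b^2\tau_c\tau_d\tau_e^{-2}$ for $k>4$ satisfy $\min\{\alpha,\beta,\gamma,\delta\}=1<2=-\epsilon$, so this very proposition gives $c^+=0$, yet it is right-veering and the paper explicitly states it is unknown whether these contact structures are tight or overtwisted. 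Your lantern rewrite $\tau_b^{\beta-k}\tau_c^{\gamma-k}\tau_d^{\delta-k}(\tau_g\tau_f\tau_e)^k\tau_e^{\epsilon}\tau_f^{\eta}$ also does not by itself yield anything: a negative twist appearing in one factorization is not an invariant and carries no information about right-veering or tightness.

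The paper's actual argument sidesteps overtwistedness of $\xi_\phi$ entirely by invoking Baldwin's capping-off theorem (Corollary~1.3 of \cite{baldwinCap}): if capping off a boundary component of the page produces an open book with vanishing contact invariant, then $c^+(\phi)=0$ as well. With $\epsilon=-k<0$ and (WLOG) $\alpha<k$, capping off the boundary parallel to $b$ makes $e$ isotopic to $a$ on the resulting three-holed sphere, so the capped-off monodromy $\phi_b$ carries $k-\alpha>0$ negative boundary-parallel Dehn twists; hence $\xi_b$ is not right-veering, thus overtwisted, thus $c^+(\xi_b)=0$, and Baldwin's theorem propagates this to $c^+(\phi)=0$. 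This capping-off step is the key tool your proposal is missing, and it is what allows the vanishing of $c^+$ to be established without having to resolve the (possibly open) question of whether $\xi_\phi$ is overtwisted.
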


\begin{proof}

Let us define $\phi_a$, $\phi_b$, $\phi_c$, $\phi_d$ the induced monodromies on the
three-holed planar surface after one ``caps off'' the boundary components parallel to
$a$, $b$ , $c$ or $d$ by gluing a disk to the corresponding boundary component and
extending the monodromy by identity on this disk. Let $\xi_a$, $\xi_b$, $\xi_c$ and
$\xi_d$ be the corresponding contact structures obtained this way.

In \cite{baldwinCap} Corollary 1.3, Baldwin proves that if the contact invariant of
any of the contact structures $\xi_a$, $\xi_b$, $\xi_c$ and $\xi_d$ is zero, then it
must be the case that $c^+(\phi)=0$. Without loss of generality, suppose that
$\epsilon = -k <0$, and $\alpha < k$. Then let's consider the open book $\phi_b$ where
the boundary component parallel to the curve $b$ is capped off. In that case, $e$
becomes isotopic to $a$, and no other curves become isotopic to these pair. Therefore,
the monodromy $\phi_b$ has $k-\alpha$ left-handed Dehn twists around the boundary
component corresponding to $a$, which shows that the supported contact structure
$\xi_b$ is overtwisted \cite{HKM1}. Therefore, the contact invariant $c^+(\xi_b)=0$ by
\cite{OS}.  Hence, it follows that $c^+(\phi)=0$

Conversely, if $\min\{\alpha,\beta,\gamma,\delta\} \geq \max \{-\epsilon, -\eta, 0\}$,
then by the previous proposition, the supported contact structure is Stein fillable,
hence $c^+(\phi) \neq 0$ by \cite{OS}. \end{proof}

\begin{remark} As in the previous remark, for the more general class of
diffeomorphisms, $\phi= \tau_a^\alpha \tau_b^\beta \tau_c^\gamma \tau_d^\delta
\tau_e^{\epsilon_1} \tau_f^{\eta_1} \ldots \tau_e^{\epsilon_k} \tau_f^{\eta_k}$, the
above argument shows that $c^+(\phi)=0$ when $\min \{ \alpha , \beta, \gamma, \delta
\} < \max \{-\epsilon, -\eta , 0 \} $.  However, the converse is not true again by the
same example given there, namely $\tau_e \tau_f \tau_e^{-1} \tau_f^{-1}$ has vanishing
$c^+(\phi)=0$, since it supports an overtwisted contact structure (\cite{OS}). We
would like to point out that this is an overtwisted contact structure on $T^3$.

\end{remark}

\section{An example}
 
\subsection{Poincar\'e homology sphere} 

Once we have the surgery description on pure three-braid closures, it is easy to play
around with simple three-braids and surgeries on them to get interesting open book
decompositions on various manifolds. 

\emph{Proof of Theorem \ref{theorem2}: }  The simplest non-trivial pure three-braid is arguably $\beta = (\sigma_2
\sigma_1 \sigma_2)^2 \sigma_1^{-4}$  whose braid closure is shown in Figure \ref{Figure8}. 

\begin{figure}[!h] \centering
\includegraphics[scale=0.6]{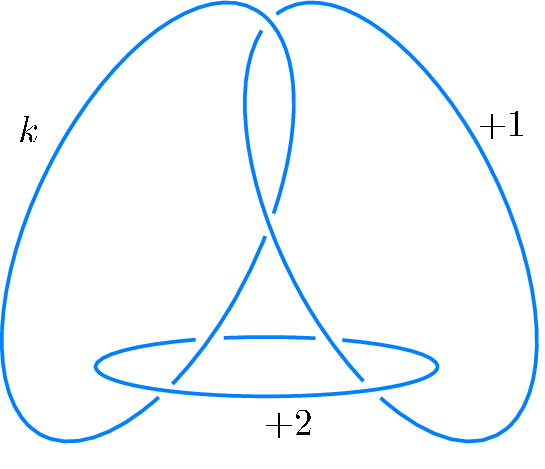} 
\caption{The braid closure of $\beta = (\sigma_2 \sigma_1 \sigma_2)^2 \sigma_1^{-4} $ }
\label{Figure8} \end{figure}

We first verify below that if we do surgery on the closure of this braid with surgery
coefficients $(k,1,2)$, then the result is $k-5$ surgery on the left-handed trefoil.
By Proposition \ref{surgery}, we obtain an open book with page a four-holed sphere and
$\phi = \tau_a^{k+1} \tau_b^2 \tau_c \tau_d \tau_e^{-2}$. In particular, the $(4,1,2)$
surgery yields the infamous Poincar\'e homology sphere which has a unique tight
contact structure.  It is easy to see that this monodromy is right-veering (the
monodromy is given by a multi-curve, and it is easy to see that all the boundary
components are protected. To be more careful, one can apply Corollary 3.4 of
\cite{HKM1} to each boundary component for the monodromy $\tau_a \tau_b \tau_c \tau_d
\tau_e^{-2}$ and use the
fact that the right-veering diffeomorphisms is a submonoid of the mapping class group).

\begin{figure}[!h]
\centering
\includegraphics[scale=0.6]{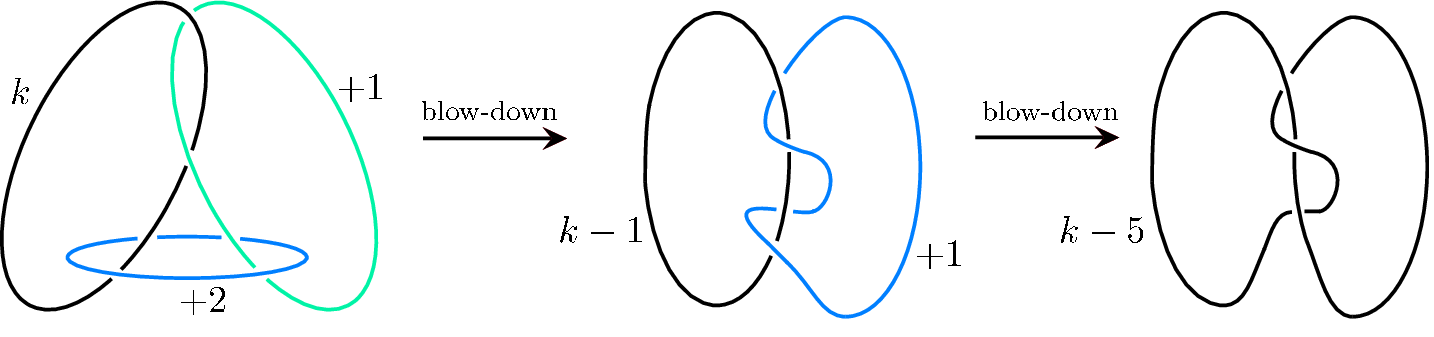} 
\caption{Surgery on the left-handed trefoil}
\label{Figure9}
\end{figure}

We next prove that $(S,\phi)$ is also not destabilizable. If it were, then it would be
a stabilization of an open book $(P,\phi')$, with page $P$ a three-holed sphere, where
$\phi' = \tau_a^p \tau_b^q \tau_c^r$ for the curves $a$, $b$ and $c$ are as in Figure
\ref{Figure1}. As we noted before, such a $(P,\phi')$ is an open book on a Seifert
fibred space with $e_0 = \lfloor -\frac{1}{p} \rfloor + \lfloor -\frac{1}{q} \rfloor +
\lfloor -\frac{1}{r} \rfloor$. Now, since $e_0$ for the Poincar\'e homology sphere is
$-2$, it follows that at least one of the exponents $p$,$q$ or $r$ is negative (for
example, $\tau_a^{-2} \tau_b^3 \tau_c^5$ is an open book on the Poincar\'e homology
sphere). Any stabilization which gives a page with
four holed sphere must be obtained by attaching a Hopf band $h$ to a fixed boundary
component of $P$ and introducing a new monodromy curve $s$ which intersects the cocore
of $h$ at a unique point, so that $\phi = \tau_s \phi' $ , where $\phi'$ is extended
by identity along $h$ to a diffeomorphism of $S$. We now argue that the monodromy of
every such open book is not right-veering. Indeed, by noting that the curve $s$ is
constraint to intersect the cocore of $h$ (which is a properly embedded arc that
connects the top two boundary components of $S$, in Figure \ref{Figure10}), it is easy
to see that one could always find a diffeomorphism of $S$ (not necessarily fixing
boundary components but sending boundary components to boundary components), such that
the configuration of monodoromy curves in $\tau_s \phi'$ is as in Figure
\ref{Figure10}, where the sets of curves $x$, $y$ and $z$ depicted are a permutation
of the images of the sets curves corresponding to Dehn twists around $a$, $b$ and $c$ after stabilization.  Indeed, $S$ is
composed of two pair-of-pants, separated by a curve parallel to $x$ curves in Figure
\ref{Figure10}, and noting the fact that $s$ intersects cocore of $h$ at a unique point, we can
arrange by isotopy so that it intersects the common boundary of the two pair-of-pants
at precisely two points, and now apply a diffeomorphism of the pair-of-pants at the
bottom, which fixes the boundary component parallel to $x$ curves, but rotates the
other boundary components if necessary. Finally, since we know that at least one of
the sets of curves $x$ or $y$ or $z$ are all negative Dehn twists, by looking at
Figure \ref{Figure10}, it is now easy to see that the monodromy $\tau_s \phi'$ is not
right-veering. This proves that $\phi$ cannot be destabilized.

\begin{figure}[!h]
\centering
\includegraphics[scale=1]{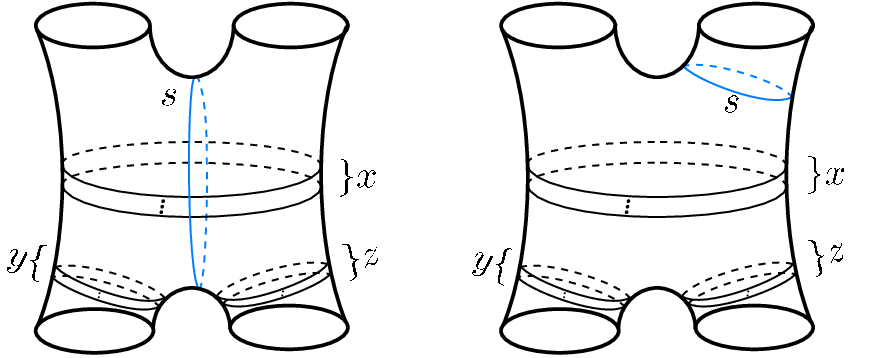} 
\caption{Stabilized open books}
\label{Figure10}
\end{figure}

On the other hand, it follows from the obstruction result of $\cite{etnyre}$ that the
unique tight contact structure on the Poincar\'e homology sphere cannot support a
planar open book.  (Alternatively, it is known that the unique tight contact structure
on the Poincar\'e homology sphere is Stein fillable, hence, by the results of Wendl
\cite{wendl}, if a planar open book supports this contact structure, it should have a
positive factorization but this contradicts our Theorem \ref{theorem3}. )

Therefore, the contact structure supported by the open book $(S, \tau_a^{k+1} \tau_b^2
\tau_c \tau_d \tau_e^{-2})$ is an overtwisted one, which completes the proof of our
Theorem \ref{theorem2}.

\begin{remark}

Note that the way we argued for the overtwistedness of $\tau_a^{5} \tau_b^2 \tau_c
\tau_d \tau_e^{-2}$ is quite special to the case of Poincar\'e homology sphere. In
particular, it used the fact that there exists a unique tight contact structure on the
Poincar\'e homology sphere. Same argument can be made for $k=0,1,2,3$ to obtain
right-veering, not destabilizable monodromies which support overtwisted contact
structures (using the classification result in \cite{ghiggini}, which in particular
says that all the tight contact structures on these manifolds are Stein fillable).
However, we do not know if $\tau_a^{k+1} \tau_b^2 \tau_c \tau_d \tau_e^{-2}$ is
overtwisted or tight for $k >4 $. These are Seifert fibered manifolds $M(-2;
\frac{1}{2}, \frac{2}{3}, \frac{k}{k+1})$, they have $e_0(M)=-2$ but they are not
L-spaces. The classification of tight contact structures on these Seifert manifolds
seems not yet to have been completed.  Note that the corresponding monodromies are all
right-veering diffeomorphisms and the contact invariants of the corresponding contact
structures are zero. 

\end{remark}


\begin{thebibliography}{99}

\bibitem{adamsSherman} C. Adams, W. Sherman, {\sl Minimum ideal triangulations of
hyperbolic 3-manifolds} Discrete Comput. Geom. {\bf 6} (1991) 135--153
\bibitem{baldwinCap} J. Baldwin, {\sl Capping off open books and the Ozsvath-Szabo
contact invariant } arXiv:0901.3797 (2009)
\bibitem{balEt} J. Baldwin, J. Etnyre,
{\sl A note on the support norm of a contact structure} arXiv:0910.5021 (2009)
\bibitem{EtL} T. Etg\"u, Y. Lekili, {\sl Examples of planar tight contact structures
with support norm one} Int. Math. Res. Not. (2010) doi: 10.1093/imrn/rnq025 
\bibitem{etnyrelectures} J. Etnyre, {\sl Lectures on open book decompositions and
contact structures} Clay. Math. Proc. {\bf 5} 103--141
\bibitem{etnyre} J. Etnyre, {\sl Planar open book decompositions and contact
structures} Int. Math. Res. Not. {\bf 2004} (2004) 4255--4267
\bibitem{EtOz} J. Etnyre, B. Ozbagci,
{\sl Invariants of contact structures from open books}, Trans. Amer. Math. Soc.,
360(6):3133--3151, 2008.
\bibitem{eudave}  L. Armas-Sanabria, M. Eudave-Mu$\tilde{\text{n}}$oz, {\sl The hexatangle} Topology
Appl. {\bf 156} (2009) 1037--1053
\bibitem{FadellNeuwirth} E. Fadell, L. Neuwirth, {\sl Configuration Spaces} Math. Scand.
{\bf 10} (1962) 111--118
\bibitem{Fox} R. H. Fox, L. Neuwirth {\sl The braid groups} Math. Scand. {\bf 10} (1962) 119--126
\bibitem{ghiggini} P. Ghiggini {\sl On tight contact structures with negative maximal
twisting number on small Seifert manifolds} Algebr. Geom. Topol. {\bf 8} (2008)
381--396
\bibitem{G} E. Giroux, {\sl G\'{e}ometrie de contact: de la dimension trois
vers les dimensions sup\'{e}rieures,} Proceedings of the
International Congress of Mathematicians {\bf 2} (2002) 405--414
\bibitem{gordon} C. McA. Gordon, {\sl Small surface and Dehn filling} Geometry and
Topology Monographs {\bf 2} (1999) 177--199
\bibitem{gordonwu} C. McA. Gordon, Y. Q. Wu, {\sl Toroidal and annular Dehn fillings}
Proc. London Math. Soc.{\bf 78} (1999) 662--700
\bibitem{HKM1}
K. Honda, W. Kazez, G. Mati\'c, {\sl Right-veering diffeomorphisms of compact surfaces
with boundary}, Inv. Math., {\bf 169} (2007) 427--449
\bibitem{HKM3}
K. Honda, W. Kazez, G. Mati\'c, {\sl On the contact class in Heegard Floer homology}
J. Differential Geom. {\bf 83} (2009) 289--311
\bibitem{magnusPeluso} W. Magnus, A. Peluso, {\sl On Knot Groups} Comm. Pure and Applied
Math. {\bf 20} (1967) 749--770
\bibitem{martelliPetronio} B. Martelli, C Petronio, {\sl Dehn filling of the ``magic''
3-manifold} Comm. Anal. Geom {\bf 14} (2006) 969--1026
\bibitem{murasugi} K. Murasugi, {\sl On closed 3-braids} Number 151 in Memoirs of the American Mathematical Society. American Mathematical Society, 1974.
\bibitem{nieder} K. Niederkr\"uger, C. Wendl, {\sl Weak symplectic fillings and holomorphic curves} arXiv:1003.1923 (2010)
\bibitem{OS} P. Ozsv\'ath, Z. Szab\'o,
{\sl Heegaard Floer homology and contact structures}, Duke Math. J. {\bf 129} (2005)
39--61.
\bibitem{OSS} P. Ozsv\'ath, A. Stipsicz, Z. Szab\'o, {\sl Planar open books and Floer homology}, Int. Math. Res. Not. {\bf 2005} (2005) 3385--3401
\bibitem{prasolov} Prasolov, V. V., Sossinsky, A. B., {\sl Knots, links, braids and
3-manifolds} Translations of Mathematical Monographs, 154. American Mathematical Society, Providence, RI, 1997.
\bibitem{thurston} W. Thurston, {\sl The Geometry and Topology of 3-manifolds} Princeton
University, 1978.
\bibitem{wand} A. Wand, {\sl Mapping class group relations, Stein fillings, and planar open book decompositions} arXiv:10062550 (2010)
\bibitem{wendl} C. Wendl, {\sl Strongly fillable contact manifolds and J-holomorphic
foliations} Duke Math J. {\bf 151} (2010) 337--384
\end{thebibliography}
\end{document}